\DeclareMathAlphabet{\mathpzc}{OT1}{pzc}{m}{it}
\theoremstyle{plain}
\newtheorem{theorem}{Theorem}[section]
\newtheorem{lemma}[theorem]{Lemma}
\newtheorem{theo}[theorem]{Theorem}
\newtheorem{coro}[theorem]{Corollary}
\newtheorem{prop}[theorem]{Proposition}
\theoremstyle{definition}
\theoremstyle{remark}
\newtheorem{rema}[theorem]{Remark}
\def\g{{\mathfrak{g}}}  
\def\k{{\Bbbk}}
\def\rg{\ell}               
\def\r{{\rm reg}}
\def\poie#1#2#3#4#5#6#7#8#9{\def\un{#5#6#7#8#9}\def\deux{#6#7#8#9}\def\trois{#2#4#8#9}
\def\quatre{#8#9}\def\cinq{#5#6#7}\def\six{#6#7}\def\sept{#2#4}
\ifx\un\empty {#1}_{#2}{#3 \hskip 0.15em}{#1}_{#4} \else \ifx\deux\empty 
{#5}(#1_{#2}){#3 \hskip 0.15em}{#5}(#1_{#4})
\else \ifx\trois\empty {#5}_{#6}(#1){#3 \hskip 0.15em}{#5}_{#7}(#1) 
\else \ifx\quatre\empty {#5}_{#6}(#1_#2){#3 \hskip 0.15em}{#5}_{#7}(#1_#4) 
\else \ifx\cinq\empty {#1}_{#2}^{#8}{#3 \hskip 0.15em}#1_#4^{#9} 
\else \ifx\six\empty {#5}(#1_{#2}^{#8}){#3 \hskip 0.15em}{#5}(#1_{#4}^{#9}) 
\else \ifx\sept\empty {#5}_{#6}(#1)^{#8}{#3 \hskip 0.15em}{#5}_{#7}(#1)^{#9} \else
{#5}_{#6}(#1_{#2}^{#8})^{#9}{#3 \hskip 0.15em}{#5}_{#7}(#1_{#4}^{#8})^{#9} 
\fi \fi \fi \fi \fi \fi \fi}
\def\poi#1#2#3#4#5#6#7{\def\un{#5#6#7}\def\deux{#6#7}
\def\trois{#2#4} \def\cinq{#3#4#5}
\ifx\un\empty {#1}_{#2}{#3 \hskip 0.15em}{#1}_{#4} \else
\ifx\deux\empty {#5}(#1_{#2}){#3 \hskip 0.15em}{#5}(#1_{#4}) \else
\ifx\trois\empty {#5}_{#6}(#1){#3 \hskip 0.15em}{#5}_{#7}(#1) \else
{#5_{#6}}(#1_{#2}){#3 \hskip 0.15em}{#5_{#7}}(#1_{#4}) \fi \fi \fi}
\def\rond{\raisebox{.3mm}{\scriptsize$\circ$}}
\def\tens{\raisebox{.3mm}{\scriptsize$\otimes$}}
\def\dv#1#2{\langle {#1},{#2}\rangle}
\def\tk#1#2{{#2}\otimes _{#1}}
\def\no{n$^{\circ}$}
\def\hd{{\mathrm {hd}}\hskip 0.15em}
\def\ec#1#2#3#4#5{\def\un{#3#4#5}\def\deux{#3#5}\def\trois{#3}
\def\four{#2#4#5}\def\five{#2#5}\def\six{#2}\def\seven{#3#4}
\def\eight{#2#4} \def\nine{#2#3#4}
\ifx\nine\empty {\rm #1}_{#5} \else
\ifx\un\empty {\rm #1}({\goth #2}) \else
\ifx\deux\empty {\rm #1}({\goth #2}_{#4}) \else
\ifx\trois\empty {\rm #1}_{#5}({\goth #2}_{#4}) \else
\ifx\four\empty {\rm #1}(#3) \else
\ifx\five\empty {\rm #1}(#3_{#4}) \else
\ifx\six\empty {\rm #1}_{#5}(#3_{#4}) \else
\ifx\seven\empty {\rm #1}_{#5} ({\goth#2})\else
\ifx\eight\empty {\rm #1}_{#5}({#3})
\fi \fi \fi \fi \fi \fi \fi \fi \fi}
\def\hec#1#2#3#4#5{\def\un{#3#4#5}\def\deux{#3#5}\def\trois{#3}
\def\four{#2#4#5}\def\five{#2#5}\def\six{#2}\def\seven{#3#4}
\def\eight{#2#4} \def\nine{#2#3#4}
\ifx\nine\empty \hat{{\rm #1}}_{#5} \else
\ifx\un\empty \hat{{\rm #1}}({\goth #2}) \else
\ifx\deux\empty \hat{{\rm #1}}({\goth #2}_{#4}) \else
\ifx\trois\empty \hat{{\rm #1}}_{#5}({\goth #2}_{#4}) \else
\ifx\four\empty \hat{{\rm #1}}(#3) \else
\ifx\five\empty \hat{{\rm #1}}(#3_{#4}) \else
\ifx\six\empty \hat{{\rm #1}}_{#5}(#3_{#4}) \else
\ifx\seven\empty \hat{{\rm #1}}_{#5} ({\goth#2})  \else
\ifx\eight\empty \hat{{\rm #1}}_{#5}({#3})
\fi \fi \fi \fi \fi \fi \fi \fi \fi}
\def\e#1#2{\ec {#1}#2{}{}{}}
\def\es#1#2{\ec {#1}{}{#2}{}{}}
\def\ai#1#2#3{\def\deux{#2#3} \def\trois{#3} \def\quatre{#2} 
\ifx\deux\empty \es S{{\goth #1}}^{{\goth #1}} \else
\ifx\trois\empty \es S{{\goth #1}^{#2}}^{{\goth #1}^{#2}} \else
\ifx\quatre\empty \es S{{\goth #1}_{#3}}^{{\goth #1}_{#3}} \else
\es S{{\goth #1}_{#3}^{#2}}^{{\goth #1}_{#3}^{#2}} \fi \fi \fi}
\def\Bbb{\mathbb}
\def\goth{\mathfrak}
\def\cal{\mathcal}
\def\gi#1#2#3#4{\def\trois{#3#4} \def\quatre{#4}\def\cinq{#3}\ifx\trois\empty {\rm i}_{#1,{\goth #2}}
\else \ifx\quatre\empty {\rm i}_{#1_{#3},{\goth #2}} \else\ifx\cinq\empty {\rm i}_{#1,{\goth #2}_{#4}} \else {\rm i}_{#1_{#3},{\goth #2}_{#4}} \fi \fi \fi}
\def\j#1#2{\def\deux{#2} \ifx\deux\empty {\rm rk}\hskip .125em{{\goth #1}} \else {\rm rk}\hskip .125em{{\goth #1}_{#2}} \fi}
\def\aj#1#2{\def\deux{#2} \ifx\deux\empty {\rm j}_{{\goth #1}} \else {\rm j}_{{\goth #1}_{#2}} \fi}
\def\an#1#2{\def\deux{#2} \ifx\deux\empty {\cal O}_{#1} \else {\cal O}_{#1,#2} \fi }
\def\han#1#2{\def\deux{#2} \ifx\deux\empty {\hat{{\cal O}}}_{#1} \else {\hat{{\cal O}}}_{#1,#2} \fi }
\def\gg#1#2{{\goth #1}_{#2}\times {\goth #1}_{#2}}
\def\sgg#1#2{\es S{\gg {#1}{#2}}}
\def\dim{{\rm dim}\hskip .125em}
\def\dd{{\rm d}}
\def\j#1#2{\ell _{{\goth #1}_{#2}}}
\def\ex #1#2{\mbox{$\bigwedge^{#1}(#2)$}} 
\def\b#1#2{{\mathrm {b}}_{{\mathfrak{#1}}_{#2}}}
\def\bi#1#2{{\mathrm B}_{{\goth {#1}_{#2}}}}
\def\bj#1#2{{\mathrm A}_{{\goth {#1}}_{#2}}}
\def\bk#1#2{{\mathrm C}_{{\goth #1}_{#2}}}
\def\bkk{{\mathrm C}}
\title
[Commuting variety]
{On the Commuting variety of a reductive Lie algebra}
\author
[J-Y Charbonnel]{Jean-Yves Charbonnel}
\address{Jean-Yves Charbonnel, Universit\'e Paris Diderot - CNRS \\
Institut de Math\'ematiques de Jussieu - Paris Rive Gauche\\
UMR 7586 \\ Groupes, repr\'esentations et g\'eom\'etrie \\
B\^atiment Sophie Germain \\ Case 7012 \\ 
75205 Paris Cedex 13, France}
\email{jean-yves.charbonnel@imj-prg.fr}
\subjclass
{14A10, 14L17, 22E20, 22E46 }
\keywords
{polynomial algebra, complex, commuting variety, Cohen-Macaulay, homology, projective
dimension, depth}
\date\today
\begin{document}

\large

\begin{abstract}
The commuting variety of a reductive Lie algebra ${\goth g}$ is the underlying 
variety of a well defined subscheme of $\gg g{}$. In this note, it is proved that 
this scheme is normal. In particular, its ideal of definition is a prime ideal. 
\end{abstract}

\maketitle

\setcounter{tocdepth}{1}
\tableofcontents

\section{Introduction} \label{int}
In this note, the base field $\k$ is algebraically closed of characteristic $0$, 
${\goth g}$ is a reductive Lie algebra of finite dimension, $\rg$ is its rank,
and $G$ is its adjoint group.    

\subsection{} \label{int1}
The dual of ${\goth g}$ identifies to ${\goth g}$ by a non degenerate symmetric bilinear 
form on ${\goth g}$ extending the Killing form of the derived  algebra of ${\goth g}$.
Denote by $(v,w)\mapsto \dv vw$ this bilinear form and denote by $I_{{\goth g}}$ the 
ideal of $\k[\gg g{}]$ generated by the functions $(x,y)\mapsto \dv v{[x,y]}$'s with $v$ 
in ${\goth g}$. The commuting variety ${\cal C}({\goth g})$ of ${\goth g}$ is the 
subvariety of elements $(x,y)$ of $\gg g{}$ such that $[x,y]=0$. It is the underlying 
variety to the subscheme ${\cal S}({\goth g})$ of $\gg g{}$ defined by $I_{{\goth g}}$. 
It is a well known and long standing open question whether or not this scheme is reduced,
that is ${\cal C}({\goth g})={\cal S}({\goth g})$. According to Richardson~\cite{Ric}, 
${\cal C}({\goth g})$ is irreducible and according to Popov~\cite[Theorem 1]{Po}, the 
singular locus of ${\cal S}({\goth g})$ has codimension at least $2$ in 
${\cal C}({\goth g})$. Then, according to Serre's normality criterion, arises the 
question to know whether or not ${\cal C}({\goth g})$ is normal. There are 
many results about the commuting variety. A result of Dixmier~\cite{Di2} proves that 
$I_{{\goth g}}$ contains all the elements of the radical of $I_{{\goth g}}$ which 
have degree $1$ in the second variable. In \cite{Ga}, Gan and Ginzburg prove that for 
${\goth g}$ simple of type ${\mathrm {A}}$, the invariant elements under $G$ of 
$I_{{\goth g}}$ is a radical ideal of the algebra $\k[\gg g{}]^{G}$ of invariant elements
of $\k[\gg g{}]$ under $G$. In \cite{Gi}, Ginzburg proves that the normalisation of 
${\cal C}({\goth g})$ is Cohen-Macaulay.  

\subsection{Main results and sketch of proofs.} \label{int2}    
According to the identfication of ${\goth g}$ and its dual, $\k[\gg g{}]$ equals the 
symmetric algebra $\sgg g{}$ of $\gg g{}$. The main result of this note is the following 
theorem:

\begin{theo}\label{tint}
The subscheme of $\gg g{}$ defined by $I_{{\goth g}}$ is Cohen-Macaulay and normal. 
Furthermore, $I_{{\goth g}}$ is a prime ideal of $\sgg g{}$.
\end{theo}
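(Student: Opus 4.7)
The plan is to verify Serre's normality criteria $R_{1}$ and $S_{2}$ for the scheme $X := \mathrm{Spec}(\sgg g{}/I_{\g})$. If both hold then $X$ is normal and hence reduced; combined with the irreducibility of $\mathcal{C}(\g)$ (Richardson), this forces $I_{\g}$ to be prime, and the two assertions of Theorem~\ref{tint} follow simultaneously. Note that $X$ has expected dimension $\dim\g+\rg$, so its codimension in $\gg g{}$ is $2n$.

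For $R_{1}$, one works on the $G$-invariant open sets $\g_{\r}\times\g$ and $\g\times\g_{\r}$. For $x\in\g_{\r}$, the map $\ad(x)\colon\g\to\g$ has kernel $\g^{x}$ of dimension exactly $\rg$, so the Jacobian of the defining relations $(x,y)\mapsto\dv v{[x,y]}$ has rank $2n$, matching the codimension of $X$. Therefore $X\cap(\g_{\r}\times\g)$ is smooth, with fiber over $x\in\g_{\r}$ equal to $\g^{x}=\mathrm{span}(\varepsilon_{1}(x),\ldots,\varepsilon_{\rg}(x))$ by Kostant. The symmetric statement holds for $X\cap(\g\times\g_{\r})$. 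The singular locus of $X$ is therefore contained in $X\cap((\g\setminus\g_{\r})\times(\g\setminus\g_{\r}))$, and a dimension count combining Veldkamp's codimension--$3$ estimate with bounds on the strata $\{(x,y):\dim(\g^{x}\cap\g^{y})>\rg\}$ will show that this locus has codimension at least $2$ in $X$.

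For $S_{2}$, which is the main effort, the plan is to construct an explicit complex of free $\sgg g{}$-modules whose zeroth cohomology is $\sgg g{}/I_{\g}$ and whose higher cohomology controls depth, built out of the polarized invariants $\varepsilon_{i}^{(m)}$. The total count $\sum_{i=1}^{\rg}d_{i}=\b g{}$ is the dimension of a Borel subalgebra, and generically the vectors $\varepsilon_{i}^{(m)}(x,y)$ span the maximal-dimensional space $V_{x,y}$; the Mishchenko--Fomenko Poisson commutativity of $\mathcal{P}_{x}$ provides the structural relations making the differentials well-defined with $\dd^{2}=0$. Exactness of this complex on a suitably big open set of $\gg g{}$, combined with an Auslander--Buchsbaum (or direct depth) argument, should force the depth of every local ring of $X$ to be at least $2$.

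The main obstacle will be controlling the degeneracy locus of $V_{x,y}$, i.e.\ the set of $(x,y)\in\gg g{}$ where $\dim V_{x,y}<\b g{}$. This is exactly where the Koszul-type complex risks losing exactness, and also where $X$ fails to be a complete intersection, so $S_{2}$ is most delicate. The crux will be to prove that this degeneracy locus has codimension at least $2$ in $\gg g{}$, and that away from it the complex built from the $\varepsilon_{i}^{(m)}$ is acyclic --- tasks for which the detailed apparatus concerning $\varepsilon_{i}^{(m)}$ and the shifted invariants $p_{i}^{(m)}$ set up in the introduction appears to be designed.
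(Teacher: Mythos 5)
Your endgame matches the paper's: check Serre's criterion, deduce reducedness, and get primality from Richardson's irreducibility. Your $R_1$ discussion is also in the right spirit, although the paper does not prove $R_1$ at all --- it simply quotes Popov's theorem that the commuting variety is smooth in codimension one; your Jacobian computation on $\g_{\r}\times \g$ is fine, but the codimension-two bound for the locus of pairs with both components non-regular is asserted rather than proved, and it does not follow from Veldkamp's estimate alone.

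The genuine gap is $S_2$, which is the whole content of the paper, and the mechanism you sketch would not work as stated. Since $I_{{\goth g}}$ has height $2n$ but requires $2n+\rg$ generators, $\mathcal{C}({\goth g})$ is far from a complete intersection, so there is no evident finite complex of \emph{free} modules of controlled length with zeroth homology $\sgg g{}/I_{{\goth g}}$; producing a usable substitute is exactly the missing idea. The paper's substitute is the characteristic module $\bi g{}$, free of rank $\b g{}$ on the $\varepsilon _{i}^{(m)}$, and the subcomplex $C_{\bullet}({\goth g})=\ex {}{{\goth g}}\wedge \ex {\b g{}}{\bi g{}}$ of the Koszul complex of the functions $(x,y)\mapsto \dv v{[x,y]}$; it is a subcomplex because $\dv {\varphi (x,y)}{[x,y]}=0$ for $\varphi $ in $\bi g{}$ --- the Mishchenko--Fomenko Poisson commutativity you invoke for $\dd ^{2}=0$ plays no role, since the differential is Koszul and $\dd^{2}=0$ is automatic. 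Two further hard ingredients are indispensable: the bound that $\ex i{{\goth g}}\wedge \ex {\b g{}}{\bi g{}}$ has projective dimension at most $i$ (this occupies Sections 2--3), and the Dixmier-type criterion of the appendix converting ``homology supported in codimension exceeding the length, terms of small projective dimension'' into actual acyclicity. Note that the homology of any complex computing $\sgg g{}/I_{{\goth g}}$ is supported on all of $\mathcal{C}({\goth g})$, of codimension only $2n$; so ``exactness on a suitably big open set'' is all one can ever have, and by itself --- even combined with Auslander--Buchsbaum --- it yields no depth bound. It is the projective-dimension control of the terms, together with the length-versus-codimension comparison applied only to the part of $C_{\bullet}({\goth g})$ in degrees above $\b g{}$, that gives projective dimension $2n-1$ for $I_{{\goth g}}$, hence depth $2\b g{}$ for $\sgg g{}/I_{{\goth g}}$, i.e. Cohen--Macaulayness (much stronger than $S_2$); and the same support/projective-dimension argument is what proves $I_{{\goth g}}$ radical. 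None of these ingredients appears in your plan, so what you have is a statement of strategy rather than a proof.
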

 
According to Richardson's result and Popov's result, it suffices to prove that the scheme 
${\cal S}({\goth g})$ is Cohen-Macaulay. The main idea of the proof in the theorem
uses the main argument of the Dixmier's proof: for a finitely generated module $M$ over 
$\sgg g{}$, $M=0$ if the codimension of its support is at least $l+2$ with $l$ the 
projective dimension of $M$ (see Appendix~\ref{p}). 

Introduce the characteristic submodule of ${\goth g}$, denoted by $\bi g{}$. By 
definition, $\bi g{}$ is a submodule of $\tk {\k}{\sgg g{}}{\goth g}$ and an element 
$\varphi $ of $\tk {\k}{\sgg g{}}{\goth g}$ is in $\bi g{}$ if and only if for all 
$(x,y)$ in $\gg g{}$, $\varphi (x,y)$ is in the sum of subspaces ${\goth g}^{ax+by}$ with
$(a,b)$ in $\k^{2}\setminus \{0\}$ and ${\goth g}^{ax+by}$ the centralizer of $ax+by$ in 
${\goth g}$. According to a Bolsinov's result, $\bi g{}$ is a free $\sgg g{}$-module of 
rank $\b g{}$, the dimension of the Borel subalgebras of ${\goth g}$. Moreover, the 
orthogonal complement of $\bi g{}$ in $\tk {\k}{\sgg g{}}{\goth g}$ is a free 
$\sgg g{}$-module of rank $\b g{}-\rg$. These two results are fundamental in the proof
of the following proposition:

\begin{prop}\label{pint}
For $i$ positive integer, the submodule $\ex i{{\goth g}}\wedge \ex {\b g{}}{\bi g{}}$ 
of $\tk {\k}{\sgg g{}}\ex {i+\b g{}}{{\goth g}}$ has projective dimension at most $i$.
\end{prop}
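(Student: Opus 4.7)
The plan is to identify $K_i := \ex i\g\wedge\ex{\b g{}}{\bi g{}}$ with the $i$-th exterior power of a module of projective dimension at most one, and then to exhibit an explicit length-$i$ free resolution. Set $F := \tk{\k}{\sgg g{}}\g$, a free $\sgg g{}$-module of rank $\dim\g$, and fix an $\sgg g{}$-basis $(e_1,\ldots,e_{\b g{}})$ of $\bi g{}$ with $\omega := e_1\wedge\cdots\wedge e_{\b g{}}$ generating the rank-one free module $\ex{\b g{}}{\bi g{}}$. Then $K_i$ is realised as the image of the $\sgg g{}$-linear wedge map
\[
\mu_i : \ex i F \longrightarrow \ex{i+\b g{}}F,\qquad \xi\longmapsto\xi\wedge\omega.
\]
If $i+\b g{}>\dim\g$ the target vanishes and $K_i=0$ trivially, so I assume $i+\b g{}\leq\dim\g$.

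First I would verify that $\ker\mu_i = \bi g{}\wedge\ex{i-1}F$. The inclusion $\supseteq$ is immediate from $e_k\wedge\omega = 0$, while for the converse I would localise at an arbitrary prime of $\sgg g{}$, extend $(e_k)$ to a basis of the localised $F$, and expand $\xi$ directly in that basis: $\xi\wedge\omega$ vanishes precisely when every basis summand of $\xi$ involves at least one $e_k$. This yields the short exact sequence
\[
0 \longrightarrow \bi g{}\wedge\ex{i-1}F \longrightarrow \ex i F \longrightarrow K_i \longrightarrow 0,
\]
identifying $K_i$ with $\ex i(F/\bi g{})$.

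Next I would resolve $K_i$ by a length-$i$ free complex. The presentation $0\to\bi g{}\to F\to F/\bi g{}\to 0$, with both $\bi g{}$ and $F$ free, gives rise (via the classical Akin--Buchsbaum--Weyman construction in characteristic zero) to a Schur complex
\[
0 \to \sz i{\bi g{}} \to \sz{i-1}{\bi g{}}\otimes_{\sgg g{}}F \to \cdots \to \bi g{}\otimes_{\sgg g{}}\ex{i-1}F \to \ex i F \to K_i \to 0,
\]
with differential $a_1\cdots a_p\otimes\xi\mapsto\sum_{j=1}^p a_1\cdots\widehat{a_j}\cdots a_p\otimes(a_j\wedge\xi)$. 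Each term is a tensor product of free $\sgg g{}$-modules, so each is free, and acyclicity in positive degrees follows from standard Schur complex theory. This furnishes a free resolution of $K_i$ of length $i$, giving $\mathrm{pd}(K_i)\leq i$.

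The hardest step will be verifying exactness of this Schur complex in positive degrees, because $F/\bi g{}$ can fail to be locally free along the rank-drop stratum of $\bi g{}$, so the inclusion $\bi g{}\hookrightarrow F$ need not be globally split. This may be handled either by appeal to the Akin--Buchsbaum--Weyman machinery, or alternatively by an explicit induction on $i$: one filters $\bi g{}\wedge\ex{i-1}F$ by the submodules $\ex p{\bi g{}}\wedge\ex{i-p}F$ and bounds the projective dimension of each successive subquotient (isomorphic to $\ex p{\bi g{}}\otimes\ex{i-p}(F/\bi g{})$ generically) using the inductive hypothesis on the smaller exterior powers of $F/\bi g{}$.
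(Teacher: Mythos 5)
Your reduction stands or falls with the exactness of the sequence $0 \rightarrow \bi g{}\wedge\ex {i-1}F \rightarrow \ex iF \rightarrow K_i \rightarrow 0$, where $F=\tk {\k}{\sgg g{}}{\goth g}$, and this is precisely where there is a gap. Since $K_i$ is the image of $\mu _i$ inside the free module $\ex {i+\b g{}}F$, it is torsion free, so $\ker \mu _i$ is automatically the \emph{saturation} of $\bi g{}\wedge \ex {i-1}F$ in $\ex iF$; asserting that it equals $\bi g{}\wedge \ex {i-1}F$ is the same as asserting that $\ex iE$, with $E=F/\bi g{}$, is torsion free, and that is exactly what is not known. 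Your verification does not establish it: at any prime whose residue field makes the $e_k$ linearly dependent --- i.e.\ at the points of the complement of $\Omega _{{\goth g}}$, where $\dim V_{x,y}<\b g{}$, a nonempty set whose codimension is only known to be at least $2$ --- the family $(e_1,\ldots ,e_{\b g{}})$ cannot be completed to a basis of the localized free module, so your expansion argument is valid only over $\Omega _{{\goth g}}$ and shows membership in $\bi g{}\wedge \ex {i-1}F$ only up to torsion. What your construction really identifies is $K_i\cong E_i$, the quotient of $\ex iE$ by its torsion submodule; this is Proposition~\ref{ptp1},(i) of the paper, and the possible discrepancy between $\ex iE$ and $E_i$ is the heart of the difficulty, not a technicality.

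The same issue invalidates the second step in a stronger form. The exterior-power (Schur) complex of the inclusion $\bi g{}\hookrightarrow F$, with terms $\tk {\sgg g{}}{\sz p{\bi g{}}}{\ex {i-p}F}$, is not acyclic for formal reasons: by the Akin--Buchsbaum--Weyman and Buchsbaum--Eisenbud criteria its acyclicity requires the grade of the determinantal ideals of the inclusion to grow with $i$, whereas here the ideal of maximal minors cuts out the complement of $\Omega _{{\goth g}}$, of codimension $2$ in general; since the proof of Theorem~\ref{tmr} needs the bound for $i$ up to $n$, the hypotheses of that machinery are simply unavailable. Your fallback filtration by the $\ex p{\bi g{}}\wedge \ex {i-p}F$ does not repair this: the successive subquotients are only \emph{generically} isomorphic to $\tk {\sgg g{}}{\ex p{\bi g{}}}{\ex {i-p}E}$, and a generic isomorphism gives no control on projective dimension --- all the trouble is concentrated on the degeneracy locus. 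This is why the paper argues differently: it uses the second free module $\bk g{}$, orthogonal to $\bi g{}$, the embedding $E\hookrightarrow \bk g{}^{*}$, and Hom/Ext computations with $E^{\#}$ to show that $E_i$ is a direct factor of the torsion-free quotient of $\tk {\sgg g{}}{E_{i-1}}E$, whose projective dimension is bounded by induction; that route needs only that $\Omega _{{\goth g}}$ is a big open subset, which is exactly the information at hand.
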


Denoting by $E$ the quotient of $\tk {\k}{\sgg g{}}{\goth g}$ by $\bi g{}$, let $E_{i}$
be the quotient of $\ex iE$ by its torsion module. The $\sgg g{}$-modules 
$\ex i{{\goth g}}\wedge \ex {\b g{}}{\bi g{}}$ and $E_{i}$ are isomorphic. Furthermore,
for $i\geq 2$, $E_{i}$ is isomorphic to a direct factor of the quotient of 
$\tk {\sgg g{}}EE_{i-1}$ by its torsion module. Denoting by $\overline{E_{i-1}}$ this 
quotient, the projective dimension of $\overline{E_{i-1}}$ is at most $d_{i-1}+1$ if 
$d_{i-1}$ is the projective dimension of $E_{i-1}$, whence a proof of the proposition by 
induction on $i$.

Let $\dd$ be the $\sgg g{}$-derivation of the algebra 
$\tk {\k}{\sgg g{}}\ex {}{{\goth g}}$ such that for all $v$ in ${\goth g}$, $\dd v $ is 
the function on $\gg g{}$, $(x,y)\mapsto \dv v{[x,y]}$. Then the ideal of 
$\tk {\k}{\sgg g{}}\ex {}{{\goth g}}$ generated by $\ex {\b g{}}{\bi g{}}$ is a graded
subcomplex of the graded complex $\tk {\k}{\sgg g{}}\ex {}{{\goth g}}$. The support
of the homology of this complex is contained in ${\cal C}({\goth g})$. Then we deduce 
from Proposition~\ref{pint} that this complex has no homology in degree different 
from $\b g{}$ and $\k[{\cal C}({\goth g})]$ is Cohen-Macaulay by Auslander-Buchsbaum's 
theorem.  

\subsection{Notations}\label{int3}
$\bullet$ For $V$ a module over a $\k$-algebra, its symmetric and exterior algebras are 
denoted by $\ec S{}V{}{}$ and $\ex {}V$ respectively. If $E$ is a 
subset of $V$, the submodule of $V$ generated by $E$ is denoted by span($E$). When $V$
is a vector space over $\k$, the grassmannian of all $d$-dimensional subspaces of $V$ is 
denoted by Gr$_d(V)$.

$\bullet$
All topological terms refer to the Zariski topology. If $Y$ is a subset of a topological
space $X$, denote by $\overline{Y}$ the closure of $Y$ in $X$. For $Y$ an open subset
of the algebraic variety $X$, $Y$ is called {\it a big open subset} if the codimension
of $X\setminus Y$ in $X$ is at least $2$. For $Y$ a closed subset of an algebraic 
variety $X$, its dimension is the biggest dimension of its irreducible components and its
codimension in $X$ is the smallest codimension in $X$ of its irreducible components. For 
$X$ an algebraic variety, $\k[X]$ is the algebra of regular functions on $X$.

$\bullet$
All the complexes considered in this note are graded complexes over ${\Bbb Z}$
of vector spaces and their differentials are homogeneous of degree $-1$ and they are 
denoted by $\dd $. As usual, the gradation of the complex is denoted by $C_{\bullet}$. 

$\bullet$ The dimension of the Borel subalgebras of ${\goth g}$ is denoted by $\b g{}$.
Set $n := \b g{}-\rg$ so that $\dim {\goth g}= 2\b g{} - \j g{}=2n+\rg$.

$\bullet$
The dual ${\goth g}^{*}$ of ${\goth g}$ identifies with ${\goth g}$ by a given non 
degenerate, invariant, symmetric bilinear form $\dv ..$ on $\gg g{}$
extending the Killing form of $[{\goth g},{\goth g}]$. 

$\bullet$ 
For $x \in \g$, denote by ${\goth g}^{x}$ the centralizer of $x$ in 
${\goth g}$. The set of regular elements of $\g$ is 
$$\g_{\r} \ := \ \{ x\in \g \ \vert \ \dim \g^x=\rg \} .$$
The subset $\g_{\r}$ of ${\goth g}$ is a $G$-invariant open subset of ${\goth g}$.
According to \cite{Ve}, ${\goth g}\setminus {\goth g}_{\r}$ is equidimensional of 
codimension $3$.  

$\bullet$
Denote by $\e Sg^{{\goth g}}$ the algebra of ${\goth g}$-invariant elements of 
$\e Sg$. Let $p_1,\ldots,p_{\rg}$ be homogeneous generators of $\e Sg^{\g}$ of degree
$\poi d1{,\ldots,}{\rg}{}{}{}$ respectively. Choose the polynomials
$\poi p1{,\ldots,}{\rg}{}{}{}$ so that $\poi d1{\leq\cdots \leq }{\rg}{}{}{}$. For 
$i=1,\ldots,\rg$ and $(x,y)\in\g \times \g$, consider a shift of $p_i$ in 
direction $y$: $p_i(x+ty)$ with $t\in\k$. Expanding $p_i(x+ty)$ as a polynomial in 
$t$, one obtains
\begin{eqnarray}\label{eq:pi}
p_i(x+ty)=\sum\limits_{m=0}^{d_i} p_{i}^{(m)} (x,y) t^m;  && \forall
(t,x,y)\in\k\times\g\times\g
\end{eqnarray}
where $y \mapsto (m!)p_{i}^{(m)}(x,y)$ is the derivative at $x$ of $p_i$ at the order
$m$ in the direction $y$. The elements $p_{i}^{(m)}$ defined by~(\ref{eq:pi}) are
invariant elements of $\tk {\k}{\e Sg}\e Sg$ under the diagonal action of $G$ in
$\gg g{}$. Remark that $p_i^{(0)}(x,y)=p_i(x)$ while $p_i^{(d_i)}(x,y)=p_i(y)$  for 
all $(x,y)\in \g\times \g$.

\begin{rema}\label{rint}
The family
$\mathcal{P}_x  :=
\{p_{i}^{(m)}(x,.); \ 1 \leq i \leq \rg, 1 \leq m \leq d_i  \}$ for $x\in\g$,
is a Poisson-commutative family of $\e Sg$ by Mishchenko-Fomenko~\cite{MF}.
One says that the family $\mathcal{P}_x$ is constructed by
the \emph{argument shift method}.
\end{rema}

$\bullet$
Let $i \in\{1,\ldots,\rg\}$. For $x$ in $\g$, denote by $\varepsilon _i(x)$ the 
element of $\g$ given by
$$ \dv {\varepsilon _{i}(x)}y = \frac{\dd }{\dd t} p_{i}(x+ty) \left \vert _{t=0} \right.
$$
for all $y$ in ${\goth g}$. Thereby, $\varepsilon _{i}$ is an invariant element of 
$\tk {\k}{\e Sg}\g$ under the canonical action of $G$. According to 
\cite[Theorem 9]{Ko}, for $x$ in ${\goth g}$, $x$ is in ${\goth g}_{\r}$ if and only if 
$\poi x{}{,\ldots,}{}{\varepsilon }{1}{\rg}$ are linearly independent. In this case, 
$\poi x{}{,\ldots,}{}{\varepsilon }{1}{\rg}$ is a basis of ${\goth g}^{x}$. 

Denote by $\varepsilon _{i}^{(m)}$, for $0\leq m\leq d_i-1$, the elements of 
$\tk{\k}{\sgg g{}}{\goth g}$ defined by the equality:
\begin{eqnarray}\label{eq:phi}
\varepsilon _i (x+ty) =\sum\limits_{ m=0}^{d_i-1} \varepsilon _i^{(m)}(x,y) t^m , &&
\forall (t,x,y)\in\k\times\g\times\g
\end{eqnarray}
and set:
$$ V_{x,y} := 
{\mathrm {span}}(\{\poie {x,y}{}{,\ldots,}{}{\varepsilon }{i}{i}{(0)}{(d_{i}-1)}, \ 
i =1,\ldots,\rg\}) $$ 
for $(x,y)$ in $\gg g{}$. 

\section{Characteristic module} \label{sc}
For $(x,y)$ in $\gg g{}$, set:
$$ V'_{x,y} = \sum_{(a,b) \in \k^{2}\setminus \{0\}} {\goth g}^{ax+by} ,$$
and denote by $P_{x,y}$ the span of $x$ and $y$. By definition, the characteristic module
$\bi g{}$ of ${\goth g}$ is the submodule of elements $\varphi $ of 
$\tk {\k}{\sgg g{}}{\goth g}$ such that $\varphi (x,y)$ is in $V'_{x,y}$ for all $(x,y)$ 
in $\gg g{}$. In this section, some properties of $\bi g{}$ are given.

\subsection{} \label{sc1}
Denote by $\Omega _{{\goth g}}$ the subset of elements $(x,y)$ of $\gg g{}$ such that
$P_{x,y}$ has dimension $2$ and such that $P_{x,y}\setminus \{0\}$ is contained in 
${\goth g}_{\r}$. According to~\cite[Corollary 10]{CMo}, $\Omega _{{\goth g}}$ is a big 
open subset of $\gg g{}$. 

\begin{prop}\label{psc1}
Let $(x,y)$ be in $\gg g{}$ such that $P_{x,y}\cap {\goth g}_{\r}$ is not empty.  

{\rm (i)} Let $O$ be an open subset of $\k^{2}$ such that $ax+by$ is in ${\goth g}_{\r}$ 
for all $(a,b)$ in $O$. Then $V_{x,y}$ is the sum of the ${\goth g}^{ax+by}$'s, 
$(a,b) \in O$.

{\rm (ii)} The spaces $[x,V_{x,y}]$ and $[y,V_{x,y}]$ are equal.

{\rm (iii)} The space $V_{x,y}$ has dimension at most $\b g{}$ and the equality holds if 
and only if $(x,y)$ is in $\Omega _{{\goth g}}$.

{\rm (iv)} The space $[x,V_{x,y}]$ is orthogonal to $V_{x,y}$. Furthermore, $(x,y)$ is 
in $\Omega _{{\goth g}}$ if and only if $[x,V_{x,y}]$ is the orthogonal complement of 
$V_{x,y}$ in ${\goth g}$.

{\rm (v)} The space $V_{x,y}$ is contained in $V'_{x,y}$. Moreover, $V_{x,y}=V'_{x,y}$ 
if $(x,y)$ is in $\Omega _{{\goth g}}$.

{\rm (vi)} For $i=1,\ldots,\rg$ and for $m=0,\ldots,d_{i}-1$, $\varepsilon _{i}^{(m)}$ is
a $G$-equivariant map.
\end{prop}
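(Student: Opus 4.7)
The pivot of all six arguments is a homogeneity extension of (\ref{eq:phi}): since $p_i$ has degree $d_i$, the map $\varepsilon _i\colon\g\to\g$ is polynomial of degree $d_i-1$, and matching coefficients in a bihomogeneous expansion of $\varepsilon _i(ax+by)$ yields
\[
\varepsilon _i(ax+by)=\sum _{m=0}^{d_i-1}a^{d_i-1-m}b^{m}\,\varepsilon _i^{(m)}(x,y)\qquad\text{for all }(a,b)\in\k^{2};
\]
invariance of $p_i$ then gives $[ax+by,\varepsilon _i(ax+by)]=0$ identically. I would prove the items in the order (vi), (i), (ii), (iv), the orthogonality half of (iii), (v), and finally the ``iff'' of (iii). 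Part (vi) is $G$-equivariance of $\varepsilon _i$ (from invariance of $p_i$) carried across the polynomial expansion. Part (ii) comes from expanding $[x+ty,\varepsilon _i(x+ty)]=0$ as a polynomial in $t$ to read off $[x,\varepsilon _i^{(0)}(x,y)]=[y,\varepsilon _i^{(d_i-1)}(x,y)]=0$ together with the ladder $[x,\varepsilon _i^{(m)}(x,y)]=-[y,\varepsilon _i^{(m-1)}(x,y)]$, whence $[x,V_{x,y}]=[y,V_{x,y}]$. For (i), Kostant's theorem identifies $\g^{ax+by}=\mathrm{span}(\varepsilon _1(ax+by),\ldots ,\varepsilon _\rg (ax+by))$ at regular points and the displayed formula places it inside $V_{x,y}$; the reverse inclusion is a Vandermonde inversion (for each $i$, choose $d_i$ pairs $(a_k,b_k)\in O$ making the matrix $[a_k^{d_i-1-m}b_k^{m}]$ invertible, so that each $\varepsilon _i^{(m)}(x,y)$ is a linear combination of the $\varepsilon _i(a_kx+b_ky)$). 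Part (iv) is immediate from (i), taking $O=\k^{2}\setminus\{0\}$ on $\Omega _\g$.

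For the orthogonality half of (iii), the invariance identity $\langle [x,u],v\rangle =\langle x,[u,v]\rangle $ reduces the task to $\langle ax+by,[u,v]\rangle =0$ for $u,v\in V_{x,y}$. A short computation yields $\mathrm{d}_{z}(p_i^{(m)}(x,\cdot ))=\varepsilon _i^{(m-1)}(x,z)$ for $m\geq 1$, so the Mishchenko--Fomenko Poisson commutation of $\mathcal{P}_{x}$ (Remark~\ref{rint}) evaluated at $z=y$ gives $\langle y,[u,v]\rangle =0$. The construction is manifestly symmetric under $(x,y)\leftrightarrow (y,x)$ --- indeed $\varepsilon _i^{(m)}(x,y)=\varepsilon _i^{(d_i-1-m)}(y,x)$, so $V_{x,y}=V_{y,x}$ --- so the symmetric argument for $\mathcal{P}_{y}$ yields $\langle x,[u,v]\rangle =0$, and bilinearity in $(a,b)$ finishes.

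Part (v) is the hinge, and its forward direction is the main obstacle. The bound $\dim V_{x,y}\leq\sum _id_i=\b g{}$ is by generator count. The implication $(\Leftarrow)$ is slick: if $(x,y)\notin\Omega _\g$, then either $\dim P_{x,y}=1$ (so $V_{x,y}$ sits inside a single $\rg$-dimensional regular centralizer) or some $z_0=a_0x+b_0y\in P_{x,y}\setminus\{0\}$ is non-regular, in which case Kostant's criterion yields scalars $(c_i)$, not all zero, with $\sum _ic_i\varepsilon _i(z_0)=0$; substituting the displayed formula produces the nontrivial linear relation $\sum _{i,m}c_ia_0^{d_i-1-m}b_0^{m}\varepsilon _i^{(m)}(x,y)=0$, dropping $\dim V_{x,y}$ below $\b g{}$. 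The direction $(\Rightarrow)$ is subtler: orthogonality alone only recovers the upper bound $\dim V_{x,y}\leq\b g{}$, so one must certify that the $\b g{}$ generators are actually independent throughout $\Omega _\g$. My plan is to verify independence at a well-chosen pair inside $\Omega _\g$ (for example $x$ a principal nilpotent and $y$ a suitable regular semisimple element with $P_{x,y}\setminus\{0\}\subseteq\g_{\r}$, where centralizers can be described explicitly), which by lower semicontinuity of rank produces a non-empty open locus $U=\{\dim V_{x,y}=\b g{}\}$; combined with $(\Leftarrow)$ (forcing $U\subseteq\Omega _\g$) and the bigness and irreducibility of $\Omega _\g$ from \cite[Corollary~10]{CMo}, I expect $U=\Omega _\g$ after a continuity analysis of the $\mathbb{P}^{1}$-family of regular centralizers. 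Finally, the ``iff'' in (iii) follows from (v) via $\dim [x,V_{x,y}]=\dim V_{x,y}-\dim (V_{x,y}\cap\g^{x})$: on $\Omega _\g$ this equals $\dim V_{x,y}-\rg=n=\dim V_{x,y}^{\perp}$, while off $\Omega _\g$ the same count would force $\dim V_{x,y}=\b g{}$, contradicting $(\Leftarrow)$.
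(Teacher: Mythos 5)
Most of your proposal is sound and runs close to the paper: (vi) and (i) are essentially the paper's arguments (the interpolation/Vandermonde inversion for (i) is exactly how the paper recovers the $\varepsilon _{i}^{(m)}(x,y)$ from the $\varepsilon _{i}(ax+by)$), your ladder relations from expanding $[x+ty,\varepsilon _{i}(x+ty)]=0$ give a valid variant of (ii), (iv) does follow from (i) as you say, and your direct proof of the isotropy statement $\dv {V_{x,y}}{[x,V_{x,y}]}=0$ via the Poisson commutativity of the shifted family $\mathcal{P}_{x}$, together with the symmetry $\varepsilon _{i}^{(m)}(x,y)=\varepsilon _{i}^{(d_{i}-1-m)}(y,x)$, is correct; the paper instead cites \cite[Theorem 2.1]{Bol} for all of (iii).

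The genuine gap is the forward direction of (v) --- equivalently the forward implication in the ``furthermore'' of (iii): that $(x,y)\in \Omega _{{\goth g}}$ forces $\dim V_{x,y}=\b g{}$, i.e.\ that the $\b g{}$ generators $\varepsilon _{i}^{(m)}(x,y)$ are independent at \emph{every} point of $\Omega _{{\goth g}}$. Your plan (check independence at one well-chosen pair, let $U=\{\dim V_{x,y}=\b g{}\}$, use semicontinuity and the irreducibility and bigness of $\Omega _{{\goth g}}$ to get $U=\Omega _{{\goth g}}$) does not close: lower semicontinuity only makes $U$ open, and $U\subseteq \Omega _{{\goth g}}$ open and dense cannot exclude a rank drop on a proper closed subset of $\Omega _{{\goth g}}$, while the proposition asserts the exact pointwise equivalence. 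Your isotropy argument yields only the upper bound $2\dim V_{x,y}-\rg \leq \dim {\goth g}$, and nothing in your proposal supplies the matching lower bound at an arbitrary point of $\Omega _{{\goth g}}$; you acknowledge this yourself (``I expect \ldots after a continuity analysis''), but that analysis is precisely the missing content. The paper imports it from Bolsinov's completeness criterion \cite[Theorem 2.1]{Bol}, which gives that $[x,V_{x,y}]$ is the \emph{full} orthogonal complement of $V_{x,y}$ exactly when $P_{x,y}\setminus \{0\}\subset {\goth g}_{\r}$ and $\dim P_{x,y}=2$; combined with the reduction to $x$ regular and the count $\dim [x,V_{x,y}]=\dim V_{x,y}-\rg$, this yields both (v) and the ``iff'' of (iii). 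Since your version of the ``iff'' in (iii) is itself deduced from (v), both statements remain unproven in your proposal unless you either invoke Bolsinov's theorem or supply an independent completeness argument for the argument-shift family at every point of $\Omega _{{\goth g}}$.
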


\begin{proof}
(i) For pairwise different elements $\poi t{i,1}{,\ldots,}{i,d_{i}-1}{}{}{}$, 
$i=1,\ldots,\rg$ of $\k\setminus \{0\}$, the $\varepsilon _{i}^{(m)}(x,y)$'s, 
$m=0,\ldots,d_{i}-1$ are linear combinations of the $\varepsilon _{i}(x+t_{i,j}y)$'s, 
$j=1,\ldots,d_{i}-1$ for $i=1,\ldots,\rg$. Furthermore, for all $z$ in ${\goth g}_{\r}$, 
$\poi z{}{,\ldots,}{}{\varepsilon }{1}{\rg}$ is a basis of ${\goth g}^{z}$ by 
~\cite[Theorem 9]{Ko}, whence the assertion since the maps 
$\poi {\varepsilon }1{,\ldots,}{\rg}{}{}{}$ are homogeneous.

(ii) Let $O$ be an open subset of $(\k\setminus \{0\})^{2}$ such that 
$ax+by$ is in ${\goth g}_{\r}$ for all $(a,b)$ in $O$. For all $(a,b)$ in $O$, 
$[x,{\goth g}^{ax+by}]=[y,{\goth g}^{ax+by}]$ since $[ax+by,{\goth g}^{ax+by}]=0$ and 
since $ab\neq 0$, whence the assertion by (i).  

(iii) According to~\cite[Ch. V, \S 5, Proposition 3]{Bou}, 
$$ \poi d1{+\cdots +}{\rg}{}{}{} = \b g{} .$$
So $V_{x,y}$ has dimension at most $\b g{}$. By \cite[Theorem 2.1]{Bol}, $V_{x,y}$
has dimension $\b g{}$ if and only if $(x,y)$ is in $\Omega _{{\goth g}}$.

(iv) According to \cite[Theorem 2.1]{Bol}, $V_{x,y}$ is a totally isotropic subspace 
with respect to the skew bilinear form on ${\goth g}$
$$ (v,w) \longmapsto \dv {ax+by}{[v,w]}$$
for all $(a,b)$ in $\k^{2}$. As a result, by invariance of $\dv ..$, $V_{x,y}$ is 
orthogonal to $[x,V_{x,y}]$. If $(x,y)$ is in $\Omega _{{\goth g}}$, ${\goth g}^{x}$ has 
dimension $\rg$ and it is contained in $V_{x,y}$. Hence, by (iii),
$$ \dim [x,V_{x,y}] = \b g{}-\rg = \dim {\goth g} - \dim V_{x,y}$$
so that $[x,V_{x,y}]$ is the orthogonal complement of $V_{x,y}$ in ${\goth g}$. 
Conversely, if $[x,V_{x,y}]$ is the orhogonal complement of $V_{x,y}$ in ${\goth g}$, 
then
$$ \dim V_{x,y} + \dim [x,V_{x,y}] = \dim {\goth g} .$$
Since $P_{x,y}\cap {\goth g}_{\r}$ is not empty, ${\goth g}^{ax+by}\cap V_{x,y}$ has 
dimension $\rg$ for all $(a,b)$ in a dense open subset of $\k^{2}$. By continuity,
${\goth g}^{x}\cap V_{x,y}$ has dimension at least $\rg$ so that 
$$ 2\dim V_{x,y} - \rg \geq \dim {\goth g} .$$
Hence, by (iii), $(x,y)$ is in $\Omega _{{\goth g}}$.

(v) According to \cite[Theorem 9]{Ko}, for all $z$ in ${\goth g}$ and for 
$i=1,\ldots,\rg$, $\varepsilon _{i}(z)$ is in ${\goth g}^{z}$. Hence for all $t$ in 
$\k$, $\varepsilon _{i}(x+ty)$ is in $V'_{x,y}$. So 
$\varepsilon _{i}^{(m)}(x,y)$ is in $V'_{x,y}$ for all $m$, whence 
$V_{x,y}\subset V'_{x,y}$.

Suppose that $(x,y)$ is in $\Omega _{{\goth g}}$. According to \cite[Theorem 9]{Ko},
for all $(a,b)$ in $\k ^{2}\setminus \{0\}$, 
$\poi {ax+by}{}{,\ldots,}{}{\varepsilon }{1}{\rg}$ is a basis of ${\goth g}^{ax+by}$. 
Hence ${\goth g}^{ax+by}$ is contained in $V_{x,y}$, whence the assertion. 

(vi) Let $i$ be in $\{1,\ldots,\rg\}$. Since $p_{i}$ is $G$-invariant, 
$\varepsilon _{i}$ is a $G$-equivariant map. As a result, its $2$-polarizations 
$\poie {\varepsilon }i{,\ldots,}{i}{}{}{}{(0)}{(d_{i}-1)}$ are $G$-equivariant under the
diagonal action of $G$ in $\gg g{}$. 
\end{proof}

\begin{theo}\label{tsc1}
{\rm (i)} The module $\bi g{}$ is a free module of rank $\b g{}$ whose a basis is the 
sequence $\varepsilon _{i}^{(0)},\ldots,\varepsilon _{i}^{(d_{i}-1)}$, $i=1,\ldots,\rg$. 

{\rm (ii)} For $\varphi $ in $\tk {\k}{\sgg g{}}{\goth g}$, $\varphi $ is in 
$\bi g{}$ if and only if $p\varphi \in \bi g{}$ for some $p$ in $\sgg g{}\setminus \{0\}$.

{\rm (iii)} For all $\varphi $ in $\bi g{}$ and for all $(x,y)$ in $\gg g{}$, 
$\varphi (x,y)$ is orthogonal to $[x,y]$.
\end{theo}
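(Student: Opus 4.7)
My plan is to prove the three assertions in the order (iii), (i), (ii). Part (iii) is a short invariance computation; part (i) is the substance of the theorem and relies on Proposition~\ref{psc1} together with the fact that $\Omega_{\g}$ is a big open subset; and part (ii) follows by a saturation argument using (i).

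For (iii), it suffices by definition of $\bi g{}$ to verify $\dv{z}{[x,y]}=0$ whenever $z\in\g^{ax+by}$ with $(a,b)\in\k^{2}\setminus\{0\}$. Normalising $a=1$ (the other case is symmetric), the relation $[x+by,z]=0$ gives $[x,z]=-b[y,z]$, and invariance of the form yields
\[
\dv{z}{[x,y]}=\dv{[z,x]}{y}=b\,\dv{[y,z]}{y}=0,
\]
since $\dv{[y,z]}{y}=-\dv{z}{[y,y]}=0$.

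For (i), the family $\mathcal{F}=\{\varepsilon_{i}^{(m)}\mid 1\le i\le\rg,\;0\le m\le d_{i}-1\}$ has $\sum_{i}d_{i}=\b g{}$ members, each lying in $\bi g{}$ by Proposition~\ref{psc1}(iv). Proposition~\ref{psc1}(v) says that for $(x,y)\in\Omega_{\g}$ the space $V_{x,y}$ has dimension exactly $\b g{}$, so the evaluations $\mathcal{F}(x,y)$ form a basis of $V_{x,y}$. This immediately gives freeness over $\sgg g{}$: any $\sgg g{}$-linear relation among the $\varepsilon_{i}^{(m)}$ would vanish at each point of the dense open $\Omega_{\g}$. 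To see $\mathcal{F}$ generates $\bi g{}$, let $\varphi\in\bi g{}$; by Proposition~\ref{psc1}(iv), $\varphi(x,y)\in V'_{x,y}=V_{x,y}$ for $(x,y)\in\Omega_{\g}$, so there are unique coefficients $c_{i}^{(m)}(x,y)$, regular on $\Omega_{\g}$ (extracted locally by an algebraic dual-basis construction), with $\varphi=\sum c_{i}^{(m)}\varepsilon_{i}^{(m)}$ on $\Omega_{\g}$. Since $\Omega_{\g}$ is a big open subset of the smooth variety $\gg g{}$, each $c_{i}^{(m)}$ extends to an element of $\sgg g{}$, and the equality of polynomial maps then holds on all of $\gg g{}$.

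For (ii), assume $p\varphi\in\bi g{}$ with $p\in\sgg g{}\setminus\{0\}$. Writing $p\varphi=\sum b_{i}^{(m)}\varepsilon_{i}^{(m)}$ via (i), for $(x,y)\in\Omega_{\g}\cap\{p=0\}$ the left side vanishes while $\mathcal{F}(x,y)$ is a basis of $V_{x,y}$, forcing $b_{i}^{(m)}(x,y)=0$ there. Since the complement of $\Omega_{\g}$ has codimension at least $2$, each irreducible component of the hypersurface $\{p=0\}$ meets $\Omega_{\g}$ in a dense subset, hence $b_{i}^{(m)}$ vanishes on all of $\{p=0\}$, so $p\mid b_{i}^{(m)}$ in $\sgg g{}$. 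Setting $c_{i}^{(m)}=b_{i}^{(m)}/p\in\sgg g{}$ and using that $\tk{\k}{\sgg g{}}{\goth g}$ is torsion-free, one obtains $\varphi=\sum c_{i}^{(m)}\varepsilon_{i}^{(m)}\in\bi g{}$. The main delicate step in the whole argument is the extension in (i); this is essentially the only place where the bigness of $\Omega_{\g}$ is used.
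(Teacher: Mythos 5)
Your treatment of (i) follows essentially the same route as the paper: the $\varepsilon _{i}^{(m)}$ lie in $\bi g{}$ by Proposition~\ref{psc1},(iv); their evaluations at a point of $\Omega _{{\goth g}}$ form a basis of $V_{x,y}$ by Proposition~\ref{psc1},(v) together with $d_{1}+\cdots +d_{\rg}=\b g{}$ (a fact worth a citation, as in the paper); freeness follows by evaluating a relation on the dense subset $\Omega _{{\goth g}}$; and the coefficients of an arbitrary $\varphi $ in $\bi g{}$ are regular on $\Omega _{{\goth g}}$ and extend to $\gg g{}$ because $\Omega _{{\goth g}}$ is a big open subset and $\gg g{}$ is normal. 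Your proof of (iii) is a genuinely different, and in fact more direct, argument: the paper deduces (iii) from (i) and Proposition~\ref{psc1},(iii) and (iv) on $\Omega _{{\goth g}}$ and then uses density, whereas your pointwise computation $\dv z{[x,y]}=\dv {[z,x]}y=b\dv {[y,z]}y=0$ for $z$ in ${\goth g}^{x+by}$ works at every $(x,y)$ and uses nothing beyond invariance of the form; this is correct and needs no density step.

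The one genuine gap is in (ii). From the vanishing of $b_{i}^{(m)}$ on $\Omega _{{\goth g}}\cap \{p=0\}$ you correctly deduce vanishing on all of $\{p=0\}$ (each irreducible component of this hypersurface has codimension $1$, so it meets the big open set $\Omega _{{\goth g}}$ in a dense subset), but the Nullstellensatz then only gives that $b_{i}^{(m)}$ is divisible by the product of the distinct irreducible factors of $p$, not by $p$ itself: for $p=q^{2}$ and $b_{i}^{(m)}=q$ the inference ``vanishes on $\{p=0\}$, hence $p$ divides $b_{i}^{(m)}$'' fails. The conclusion is still true and the repair is cheap: either reduce to $p$ irreducible by induction on the factorization of $p$ (for $p$ irreducible the ideal $(p)$ is radical and your argument is complete), or argue as the paper does, proving (i) and (ii) simultaneously: if $p\varphi \in \bi g{}$, then $\varphi (x,y)\in V_{x,y}$ on the dense open subset $\{p\neq 0\}\cap \Omega _{{\goth g}}$ of $\Omega _{{\goth g}}$, and since $(x,y)\mapsto V_{x,y}$ is a regular map from $\Omega _{{\goth g}}$ to the Grassmannian, this membership is a closed condition on $\Omega _{{\goth g}}$ and therefore holds on all of $\Omega _{{\goth g}}$; the extension argument of (i) then applies verbatim to $\varphi $ and yields $\varphi \in \bi g{}$ with no divisibility discussion at all.
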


\begin{proof}
(i) and (ii) According to Proposition~\ref{psc1},(v), $\varepsilon _{i}^{(m)}$ is in 
$\bi g{}$ for all $(i,m)$. Moreover, according to Proposition~\ref{psc1},(iii), these 
elements are linearly independent over $\sgg g{}$. Let $\varphi $ be an element of 
$\tk {\k}{\sgg g{}}{\goth g}$ such that $p\varphi $ is in $\bi g{}$ for some $p$
in $\sgg g{}\setminus \{0\}$. Since $\Omega _{{\goth g}}$ is a big open subset of 
$\gg g{}$, for all $(x,y)$ in a dense open subset of $\Omega _{{\goth g}}$, 
$\varphi (x,y)$ is in $V_{x,y}$ by Proposition~\ref{psc1},(v). According to 
Proposition~\ref{psc1},(iii), the map
$$\Omega _{{\goth g}} \longrightarrow \ec {Gr}g{}{}{\b g{}}, \qquad
(x,y) \longmapsto V_{x,y} $$
is regular. So, $\varphi (x,y)$ is in $V_{x,y}$ for all $(x,y)$ in $\Omega _{{\goth g}}$
and for some regular functions $a_{i,m}$, $i=1,\ldots,\rg$, $m=0,\ldots,d_{i}-1$ on 
$\Omega _{{\goth g}}$,
$$ \varphi (x,y) = \sum_{i=1}^{\rg} \sum_{m=0}^{d_{i}-1} 
a_{i,m}(x,y)\varepsilon _{i}^{(m)}(x,y) $$
for all $(x,y)$ in $\Omega _{{\goth g}}$. Since $\Omega _{{\goth g}}$ is a big open subset
of $\gg g{}$ and since $\gg g{}$ is normal, the $a_{i,m}$'s have a regular extension to 
$\gg g{}$. Hence $\varphi $ is a linear combination of the 
$\varepsilon _{i}^{(m)}$'s with coefficients in $\sgg g{}$. As a result, the sequence 
$\varepsilon _{i}^{(m)}$, $i=1,\ldots,\rg$, $m=0,\ldots,d_{i}-1$ is a basis of the module
$\bi g{}$ and $\bi g{}$ is the subset of elements $\varphi $ of 
$\tk {\k}{\sgg g{}}{\goth g}$ such that $p\varphi \in \bi g{}$ for some $p$ in 
$\sgg g{}\setminus \{0\}$. 

(iii) Let $\varphi $ be in $\bi g{}$. According to (i) and Proposition~\ref{psc1},(iv),
for all $(x,y)$ in $\Omega _{{\goth g}}$, $[x,\varphi (x,y)]$ is orthogonal 
to $V_{x,y}$. Then, since $y$ is in $V_{x,y}$, $[x,\varphi (x,y)]$ is orthogonal to 
$y$ and $\dv {\varphi (x,y)}{[x,y]}=0$, whence the assertion.
\end{proof}

\subsection{} \label{sc2}
Also denote by $\dv ..$ the natural extension of $\dv ..$ to the module 
$\tk {\k}{\sgg g{}}{\goth g}$.

\begin{prop}\label{psc2}
Let $\bk g{}$ be the orthogonal complement of $\bi g{}$ in $\tk {\k}{\sgg g{}}{\goth g}$.

{\rm (i)} For $\varphi $ in $\tk {\k}{\sgg g{}}{\goth g}$, $\varphi $ is in
$\bk g{}$ if and only if $\varphi (x,y)$ is in $[x,V_{x,y}]$ for all $(x,y)$ in 
a nonempty open subset of $\gg g{}$.

{\rm (ii)} The module $\bk g{}$ is free of rank $\b g{}-\rg$. Furthermore, the sequence
of maps
$$ (x,y) \mapsto [x,\varepsilon _{i}^{(1)}(x,y)],\ldots,
(x,y)\mapsto [x,\varepsilon _{i}^{(d_{i}-1)}(x,y)], \ i=1,\ldots,\rg$$
is a basis of $\bk g{}$.

{\rm (iii)} The orthogonal complement of $\bk g{}$ in $\tk {\k}{\sgg g{}}{\goth g}$ equals
$\bi g{}$.
\end{prop}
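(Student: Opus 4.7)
The three parts will be handled in sequence, with (i) setting up the geometric characterization of $\bk g{}$, (ii) producing a basis, and (iii) deducing biduality.

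First I would prove (i) by combining Theorem~\ref{tsc1}(i), which realises $\bi g{}$ as the free $\sgg g{}$-module spanned by the $\varepsilon_i^{(m)}$'s, with Proposition~\ref{psc1}(iii), which identifies $[x,V_{x,y}]$ as the orthogonal complement of $V_{x,y}$ in $\goth g$ precisely on $\Omega_{\goth g}$. If $\varphi\in\bk g{}$, then $\langle\varphi(x,y),\varepsilon_i^{(m)}(x,y)\rangle=0$ identically, so on the big open $\Omega_{\goth g}$ one has $\varphi(x,y)\perp V_{x,y}$, i.e.\ $\varphi(x,y)\in[x,V_{x,y}]$. Conversely, any nonempty open $U\subseteq\gg g{}$ meets $\Omega_{\goth g}$ (which is dense); on $U\cap\Omega_{\goth g}$, the hypothesis $\varphi(x,y)\in[x,V_{x,y}]=V_{x,y}^{\perp}$ forces every $\langle\varphi,\varepsilon_i^{(m)}\rangle$ to vanish on a nonempty open set and hence identically by irreducibility of $\gg g{}$. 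Since Theorem~\ref{tsc1}(i) presents an arbitrary element of $\bi g{}$ as an $\sgg g{}$-linear combination of the $\varepsilon_i^{(m)}$'s, this yields $\varphi\in\bk g{}$.

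For (ii), part (i) immediately places each $(x,y)\mapsto[x,\varepsilon_i^{(m)}(x,y)]$ with $1\le m\le d_i-1$ into $\bk g{}$, giving the correct count $\sum_i(d_i-1)=\b g{}-\rg$. To obtain linear independence and spanning, I would use that for $(x,y)\in\Omega_{\goth g}$ with $x$ regular, the containment $\goth g^x\subseteq V_{x,y}$ (from $\varepsilon_i^{(0)}(x,y)=\varepsilon_i(x)$) gives $\ker(\ad x\,|_{V_{x,y}})=\goth g^x$, so the vectors $[x,\varepsilon_i^{(m)}(x,y)]$ with $m\ge 1$ form a basis of the $(\b g{}-\rg)$-dimensional space $[x,V_{x,y}]$; $\sgg g{}$-linear independence follows by pointwise evaluation on $\Omega_{\goth g}$. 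For generation, given $\psi\in\bk g{}$, write $\psi(x,y)=\sum_{i,\,m\ge 1}b_{i,m}(x,y)\,[x,\varepsilon_i^{(m)}(x,y)]$ on $\Omega_{\goth g}$; the map $(x,y)\mapsto[x,V_{x,y}]$ into a Grassmannian is regular (as $(x,y)\mapsto V_{x,y}$ is, by Proposition~\ref{psc1}(v)), so the $b_{i,m}$ are regular on $\Omega_{\goth g}$, and the bigness of $\Omega_{\goth g}$ together with the normality (indeed smoothness) of $\gg g{}$ extends them to $\sgg g{}$, replicating the extension step in Theorem~\ref{tsc1}(i).

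Part (iii) is where the work is concentrated. The inclusion $\bi g{}\subseteq(\bk g{})^{\perp}$ is immediate. Conversely, suppose $\varphi\in\tk{\k}{\sgg g{}}{\goth g}$ is orthogonal to $\bk g{}$. Taking $\psi(x,y)=[x,\varepsilon_i^{(m)}(x,y)]$ from (ii) and using invariance of $\dv..$, one has $\langle[x,\varphi(x,y)],\varepsilon_i^{(m)}(x,y)\rangle=0$ in $\sgg g{}$ for $m\ge 1$; for $m=0$ the same orthogonality is automatic since $\varepsilon_i^{(0)}(x,y)=\varepsilon_i(x)\in\goth g^x$. Hence on $\Omega_{\goth g}$, $[x,\varphi(x,y)]\perp V_{x,y}$, so by Proposition~\ref{psc1}(iii) it belongs to $[x,V_{x,y}]$. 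Writing $[x,\varphi(x,y)]=[x,v]$ with $v\in V_{x,y}$ gives $\varphi(x,y)-v\in\goth g^x\subseteq V_{x,y}$, whence $\varphi(x,y)\in V_{x,y}$ on $\Omega_{\goth g}$. The Grassmannian-plus-normality argument from the proof of Theorem~\ref{tsc1}(i) then expresses $\varphi$ as an $\sgg g{}$-linear combination of the $\varepsilon_i^{(m)}$'s, so $\varphi\in\bi g{}$.

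The principal obstacle lies in (iii): converting orthogonality against the generators of $\bk g{}$ into the pointwise membership $\varphi(x,y)\in V_{x,y}$ requires coordinating the invariance of $\dv..$, the Kostant fact $\varepsilon_i(x)\in\goth g^x$, Proposition~\ref{psc1}(iii) to identify $V_{x,y}^{\perp}$ with $[x,V_{x,y}]$, and the containment $\goth g^x\subseteq V_{x,y}$. Only once all four are in place does the bigness of $\Omega_{\goth g}$ combined with the normality of $\gg g{}$ close the biduality.
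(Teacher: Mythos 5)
Your proposal is correct and follows essentially the same route as the paper: part (i) via Proposition~\ref{psc1},(iii) together with the basis of $\bi g{}$ from Theorem~\ref{tsc1}, part (ii) by showing the maps $(x,y)\mapsto [x,\varepsilon _{i}^{(m)}(x,y)]$, $m\geq 1$, span the $(\b g{}-\rg)$-dimensional space $[x,V_{x,y}]$ on $\Omega _{{\goth g}}$ and extending the coefficients using that $\Omega _{{\goth g}}$ is big and $\gg g{}$ is normal, and part (iii) by reducing to $\varphi (x,y)\in V_{x,y}$ on $\Omega _{{\goth g}}$. The only (harmless) deviation is in (iii), where you transfer the bracket onto $\varphi $ by invariance and use ${\goth g}^{x}\subset V_{x,y}$, whereas the paper concludes directly from $\varphi (x,y)\perp [x,V_{x,y}]=V_{x,y}^{\perp}$ and nondegeneracy of the form.
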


\begin{proof}
(i) Let $\varphi $ be in $\tk {\k}{\sgg g{}}{\goth g}$. If $\varphi $ is in
$\bk g{}$, then $\varphi (x,y)$ is orthogonal to $V_{x,y}$ for all $(x,y)$ in 
$\Omega _{{\goth g}}$. Then, according to Proposition~\ref{psc1},(iv), $\varphi (x,y)$
is in $[x,V_{x,y}]$ for all $(x,y)$ in $\Omega _{{\goth g}}$. Conversely,  
suppose that $\varphi (x,y)$ is in $[x,V_{x,y}]$ for all $(x,y)$ in a nonempty open 
subset $V$ of $\gg g{}$. By Proposition~\ref{psc1},(iv) again, for all $(x,y)$ in 
$V\cap \Omega _{{\goth g}}$, $\varphi (x,y)$ is orthogonal to the 
$\varepsilon _{i}^{(m)}(x,y)$'s, $i=1,\ldots,\rg$, $m=0,\ldots,d_{i}-1$, whence the 
assertion by Theorem~\ref{psc1}.

(ii) Let $\bkk $ be the submodule of $\tk {\k}{\sgg g{}}{\goth g}$ generated by the maps
$$ (x,y) \mapsto [x,\varepsilon _{i}^{(1)}(x,y)],\ldots,
(x,y)\mapsto [x,\varepsilon _{i}^{(d_{i}-1)}(x,y)], \ i=1,\ldots,\rg$$
According to (i), $\bkk $ is a submodule of $\bk g{}$. This module is free of rank 
$\b g{}-\rg$ since $[x,V_{x,y}]$ has dimension $\b g{}-\rg$ for all $(x,y)$ in 
$\Omega _{{\goth g}}$ by Proposition~\ref{psc1}, (iii) and (iv). According to (i), for 
$\varphi $ in $\bk g{}$, for all $(x,y)$ in $\Omega _{{\goth g}}$,
$$ \varphi (x,y) = \sum_{i=1}^{\rg} \sum_{m=1}^{d_{i}-1} 
a_{i,m}(x,y)[x,\varepsilon _{i}^{(m)}(x,y)]$$
with the $a_{i,m}$'s regular on $\Omega _{{\goth g}}$ and uniquely defined by this 
equality. Since $\Omega _{{\goth g}}$ is a big open subset of $\gg g{}$ and since 
$\gg g{}$ is normal, the $a_{i,m}$'s have a regular extension to $\gg g{}$. As a result,
$\varphi $ is in $\bkk$, whence the assertion.

(iii) Let $\varphi $ be in the orthogonal complement of $\bk g{}$ in 
$\tk {\k}{\sgg g{}}{\goth g}$. According to (ii), for all $(x,y)$ in 
$\Omega _{{\goth g}}$, $\varphi (x,y)$ is orthogonal to $[x,V_{x,y}]$. Hence by 
Proposition~\ref{psc1},(iv), $\varphi (x,y)$ is in $V_{x,y}$ for all $(x,y)$ in 
$\Omega _{{\goth g}}$. So, by Theorem~\ref{psc1}, $\varphi $ is in $\bi g{}$, whence
the assertion.
\end{proof}

Denote by ${\cal B}$ and ${\cal C}$ the localizations of $\bi g{}$ and $\bk g{}$ on 
$\gg g{}$ respectively. For $(x,y)$ in $\gg g{}$, let $C_{x,y}$ be the image of 
$\bk g{}$ by the evaluation map at $(x,y)$.

\begin{lemma}\label{lsc2}
There exists an affine open cover ${\cal O}$ of $\Omega _{{\goth g}}$ verifying the 
following condition: for all $O$ in ${\cal O}$, there exist some subspaces $E$ and 
$F$ of ${\goth g}$, depending on $O$, such that 
$$ {\goth g} = E\oplus V_{x,y} = F \oplus C_{x,y}$$
for all $(x,y)$ in $O$. Moreover, for all $(x,y)$ in $O$, the orthogonal complement 
of $V_{x,y}$ in ${\goth g}$ equals $C_{x,y}$.
\end{lemma}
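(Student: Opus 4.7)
The plan proceeds in three steps. First I will establish the orthogonality identity $C_{x,y}=V_{x,y}^{\perp}$ pointwise on $\Omega _{\goth g}$; second, for each base point $(x_0,y_0)\in \Omega _{\goth g}$, I will produce fixed subspaces $E,F\subset {\goth g}$ and an open neighborhood of $(x_0,y_0)$ on which both direct sum decompositions hold simultaneously; third, I will refine to an affine open neighborhood, and let the base point vary to obtain the cover.

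For the orthogonality step, fix $(x,y)\in \Omega _{\goth g}$. By Proposition~\ref{psc1}(v), the family $\{\varepsilon _i^{(m)}(x,y):1\le i\le \rg,\ 0\le m\le d_i-1\}$ is a basis of $V_{x,y}$. Since $\varepsilon _i^{(0)}(x,y)=\varepsilon _i(x)\in {\goth g}^x$, the bracket $[x,\varepsilon _i^{(0)}(x,y)]$ vanishes, so $[x,V_{x,y}]$ is spanned by the $n=\sum _i(d_i-1)$ vectors $[x,\varepsilon _i^{(m)}(x,y)]$ with $m\ge 1$. By Proposition~\ref{psc1}(iii) this span equals $V_{x,y}^{\perp}$, which has dimension $n$, so these brackets form a basis of $V_{x,y}^{\perp}$. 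But by Proposition~\ref{psc2}(ii) they are precisely the evaluations at $(x,y)$ of a basis of $\bk g{}$, hence $C_{x,y}=V_{x,y}^{\perp}$ at every $(x,y)\in \Omega _{\goth g}$.

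For the neighborhood step, I fix $(x_0,y_0)\in \Omega _{\goth g}$ and choose any complement $E$ of $V_{x_0,y_0}$ in ${\goth g}$ and any complement $F$ of $C_{x_0,y_0}$ in ${\goth g}$. Both $(x,y)\mapsto V_{x,y}$ and $(x,y)\mapsto C_{x,y}$ are regular maps from $\Omega _{\goth g}$ into the appropriate Grassmannians: the first is the map already used in the proof of Theorem~\ref{tsc1}, and the second follows identically using the explicit basis of $\bk g{}$ provided by Proposition~\ref{psc2}(ii). Since transversality to a fixed subspace is an open condition on each Grassmannian, the locus $U\subseteq \Omega _{\goth g}$ on which ${\goth g}=E\oplus V_{x,y}$ and ${\goth g}=F\oplus C_{x,y}$ both hold is open and contains $(x_0,y_0)$. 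As $\gg g{}$ is affine, $U$ contains a principal affine open $O$ around $(x_0,y_0)$; letting $(x_0,y_0)$ range over $\Omega _{\goth g}$ yields the desired cover $\mathcal{O}$.

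I expect no substantial obstacle here. The crux is really the first step, the pointwise identification $C_{x,y}=V_{x,y}^{\perp}$, which reduces to a dimension count combined with the orthogonality $V_{x,y}^{\perp}=[x,V_{x,y}]$ from Proposition~\ref{psc1}(iii) and the vanishing of the $m=0$ brackets. The remaining work is standard openness of transversality on a Grassmannian together with the affine-topology observation that every open inside an affine variety admits a basis of principal opens.
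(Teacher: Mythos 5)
Your proof is correct and follows essentially the same route as the paper: constancy of $\dim V_{x,y}$ and $\dim C_{x,y}$ on $\Omega_{\goth g}$ gives regular maps to the Grassmannians, and openness of complementarity to a fixed subspace plus refinement by principal affine opens yields the cover. You merely spell out explicitly the pointwise identity $C_{x,y}=[x,V_{x,y}]=V_{x,y}^{\perp}$ (via Proposition~\ref{psc1},(iii) and the basis of $\bk g{}$ from Proposition~\ref{psc2},(ii)), which the paper's terse proof leaves implicit.
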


\begin{proof}
According to Proposition~\ref{psc1},(iii) and (iv), for all $(x,y)$ in 
$\Omega _{{\goth g}}$, $V_{x,y}$ and $C_{x,y}$ have dimension $\b g{}$ and $\b g{}-\rg$
respectively so that the maps
$$ \Omega _{{\goth g}} \longrightarrow \ec {Gr}g{}{}{\b g{}}, \qquad 
(x,y) \longmapsto V_{x,y}, \qquad
\Omega _{{\goth g}} \longrightarrow \ec {Gr}g{}{}{\b g{}-\rg}, \qquad 
(x,y) \longmapsto C_{x,y}$$
are regular, whence the assertion.
\end{proof}

\section{Torsion and projective dimension} \label{tp}
Let $E$ and $E^{\#}$ be the quotients of $\tk {\k}{\sgg g{}}{\goth g}$ by $\bi g{}$ and 
$\bk g{}$ respectively. For $i$ positive integer, denote by $E_{i}$ the quotient of 
$\ex iE$ by its torsion module.

\subsection{} \label{tp1}
Let $\bi g{}^{*}$ and $\bk g{}^{*}$ be the duals of $\bi g{}$ and $\bk g{}$ respectively.

\begin{lemma}\label{ltp1}
{\rm (i)} The $\sgg g{}$-modules $E$ and $E^{\#}$ have projective dimension at most $1$.

{\rm (ii)} The $\sgg g{}$-modules $E$ and $E^{\#}$ are torsion free.

{\rm (iii)} The modules $\bk g{}$ and $\bi g{}$ are the duals of $E$ and $E^{\#}$ 
respectively.

{\rm (iv)} The canonical morphism from $E$ to $\bk g{}^{*}$ is an embedding.
\end{lemma}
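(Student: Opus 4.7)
The plan is to exploit the two short exact sequences
\begin{equation*}
0 \longrightarrow \bi g{} \longrightarrow T \longrightarrow E \longrightarrow 0, \qquad 0 \longrightarrow \bk g{} \longrightarrow T \longrightarrow E^{\#} \longrightarrow 0,
\end{equation*}
where $T := \tk{\k}{\sgg g{}}{\goth g}$, combined with the $\sgg g{}$-bilinear extension of $\dv..$ to $T$, which is a perfect pairing identifying $T$ with its own dual.

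For (i), since $T$ is free and both $\bi g{}$ and $\bk g{}$ are free by Theorem~\ref{tsc1},(i) and Proposition~\ref{psc2},(ii) respectively, the two sequences above are themselves free resolutions of length one, which immediately gives the claimed projective-dimension bounds. For (ii), I would argue that $E$ (resp.\ $E^{\#}$) is torsion free if and only if $\bi g{}$ (resp.\ $\bk g{}$) is saturated in $T$. Saturation of $\bi g{}$ is precisely Theorem~\ref{tsc1},(ii). For $\bk g{}$, I would invoke the geometric characterisation of Proposition~\ref{psc2},(i): if $p\varphi \in \bk g{}$ for some $p \in \sgg g{}\setminus\{0\}$, then $p(x,y)\varphi(x,y) \in [x,V_{x,y}]$ on a nonempty open subset $V$ of $\gg g{}$, and on the still nonempty open subset $V \cap \{p \neq 0\}$ the element $\varphi(x,y)$ itself lies in $[x,V_{x,y}]$, so $\varphi \in \bk g{}$ by Proposition~\ref{psc2},(i) again.

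Part (iii) exploits the perfect pairing. Dualising the first sequence and using $T^{*} \simeq T$ via $\dv..$, the module $E^{*}$ is identified with the submodule of $T$ consisting of elements orthogonal to $\bi g{}$, which is $\bk g{}$ by definition. The same computation applied to the second sequence identifies $(E^{\#})^{*}$ with the orthogonal complement of $\bk g{}$ in $T$, and this equals $\bi g{}$ by Proposition~\ref{psc2},(iii).

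Part (iv) then essentially falls out of (iii): the canonical morphism $E \to \bk g{}^{*}$ sends $[\varphi]$ to the linear form $\psi \mapsto \dv{\psi}{\varphi}$ on $\bk g{}$, which is well-defined because $\bi g{}$ is orthogonal to $\bk g{}$. Its kernel consists of those classes $[\varphi]$ whose representatives are orthogonal to all of $\bk g{}$, i.e.\ lie in $\bi g{}$ by Proposition~\ref{psc2},(iii), i.e.\ satisfy $[\varphi] = 0$. The main obstacle I anticipate is in (ii), where one must make the open-set argument for $\bk g{}$ precise; once saturation is in place, all four assertions follow quickly from the free resolutions and the non-degeneracy of $\dv..$.
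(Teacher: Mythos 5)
Your proposal is correct and follows essentially the same route as the paper: the two free resolutions of $E$ and $E^{\#}$ coming from Theorem~\ref{tsc1} and Proposition~\ref{psc2},(ii), saturation of $\bi g{}$ and $\bk g{}$ in $\tk {\k}{\sgg g{}}{\goth g}$ for torsion-freeness, and the perfect pairing $\dv ..$ together with Proposition~\ref{psc2},(iii) for parts (iii) and (iv). The only (harmless) divergence is your justification of the saturation of $\bk g{}$, argued geometrically via Proposition~\ref{psc2},(i) and the irreducibility of $\gg g{}$, whereas the paper gets it directly from the definition of $\bk g{}$ as an orthogonal complement: $\dv {p\varphi }{\mu }=p\dv {\varphi }{\mu }=0$ for all $\mu $ in $\bi g{}$ with $p\neq 0$ forces $\dv {\varphi }{\mu }=0$ since $\sgg g{}$ is an integral domain.
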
 

\begin{proof}
(i) By definition, the short sequences of $\sgg g{}$-modules,
$$ 0 \longrightarrow \bi g{} \longrightarrow \tk {\k}{\sgg g{}}{\goth g} 
\longrightarrow E \longrightarrow 0$$
$$ 0 \longrightarrow \bk g{} \longrightarrow \tk {\k}{\sgg g{}}{\goth g} 
\longrightarrow E^{\#} \longrightarrow 0$$
are exact. Hence $E$ and $E^{\#}$ have projective dimension at most $1$ since $\bi g{}$ 
and $\bk g{}$ are free modules by Theorem~\ref{tsc1} and Proposition~\ref{psc2},(ii). 

(ii) The module $E$ is torsion free by Theorem~\ref{tsc1},(ii). By definition, 
for $\varphi $ in $\tk {\k}{\sgg g{}}{\goth g}$, $\varphi $ is in $\bk g{}$ if 
$p\varphi $ is in $\bk g{}$ for some $p$ in $\sgg g{}\setminus \{0\}$, whence $E^{\#}$ 
is torsion free.

(iii) According to the exact sequences of (i), the dual of $E$ is the orthogonal 
complement of $\bi g{}$ in $\tk {\k}{\sgg g{}}{\goth g}$ and the dual of $E^{\#}$ is the 
orthogonal complement of $\bk g{}$ in $\tk {\k}{\sgg g{}}{\goth g}$, whence the assertion
since $\bk g{}$ is the orthogonal complement of $\bi g{}$ in 
$\tk {\k}{\sgg g{}}{\goth g}$ by definition and since $\bi g{}$ is the orthogonal 
complement of $\bk g{}$ in $\tk {\k}{\sgg g{}}{\goth g}$ by Proposition~\ref{psc2},(iii).

(iv) Let $\overline{\omega }$ be in the kernel of the canonical morphism from 
$E$ to $\bk g{}^{*}$. Let $\omega $ be a representative of $\overline{\omega }$ in 
$\tk {\k}{\sgg g{}}{\goth g}$. According to Proposition~\ref{psc2},(iii), $\bi g{}$ is 
the orthogonal complement of $\bk g{}$ in $\tk {\k}{\sgg g{}}{\goth g}$ so that $\omega $
is in $\bi g{}$, whence the assertion.
\end{proof}

Set: 
$$ \varepsilon = \wedge _{i=1}^{\rg} 
\poie {\varepsilon }i{ \wedge \cdots \wedge }{i}{}{}{}{(0)}{(d_{i}-1)}$$
and for $i$ positive integer, denote by $\theta _{i}$ the morphism 
$$\tk {\k}{\sgg g{}}\ex i{{\goth g}} \longrightarrow 
\ex i{{\goth g}}\wedge \ex {\b g{}}{\bi g{}}, \qquad 
\varphi \longmapsto \varphi \wedge \varepsilon .$$

\begin{prop}\label{ptp1}
Let $i$ be a positive integer. 

{\rm (i)} The morphism $\theta _{i}$ defines through the quotient
an isomorphism from $E_{i}$ onto $\ex i{{\goth g}}\wedge \ex {\b g{}}{\bi g{}}$.

{\rm (ii)} The short sequence of $\sgg g{}$-modules
$$ 0 \longrightarrow \tk {\sgg g{}}{\bi g{}}E_{i} \longrightarrow 
\tk {\k}{{\goth g}}E_{i} \longrightarrow \tk {\sgg g{}}{E}E_{i} \longrightarrow 0$$
is exact.
\end{prop}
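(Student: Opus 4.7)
For part (i), I would first check that $\theta_{i}$ passes to the quotient $E_{i}$ and then prove bijectivity by a fiberwise argument on $\Omega_{\goth g}$. By Theorem~\ref{tsc1}(i), $\bi g{}$ is free of rank $\b g{}$ with basis $\{\varepsilon_{j}^{(m)}\}$, so $\varepsilon$ is a free generator of the rank-one module $\ex{\b g{}}{\bi g{}}$ and $\psi\wedge\varepsilon=0$ for every $\psi\in\bi g{}$. Hence whenever some wedge factor of $\varphi\in\tk{\k}{\sgg g{}}{\ex i{\goth g}}$ lies in $\bi g{}$, one has $\theta_{i}(\varphi)=0$, so $\theta_{i}$ factors through $\ex i{E}$. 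The codomain $\ex i{{\goth g}}\wedge\ex{\b g{}}{\bi g{}}$ sits inside the free module $\ex{i+\b g{}}{\tk{\k}{\sgg g{}}{\goth g}}$ and is therefore torsion free, so the induced map descends further to $E_{i}$. Surjectivity onto the codomain is then immediate, since $\ex{\b g{}}{\bi g{}}$ is generated by $\varepsilon$.

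For the injectivity, the plan is to work on the big open $\Omega_{\goth g}$ via the affine cover provided by Lemma~\ref{lsc2}. On each chart $O$ one has a splitting $\g=E'\oplus V_{x,y}$ for a fixed subspace $E'$ of $\g$; combined with Proposition~\ref{psc1}(v), this shows that $\bi g{}|_{O}$ is a locally trivial subbundle with fiber $V_{x,y}$, so $E|_{O}$ is locally free of rank $n$ with fiber $\g/V_{x,y}$. In particular, $\ex i{E}|_{O}$ is already torsion free and therefore coincides with $E_{i}|_{O}$. At each $(x,y)\in O$, the values $\varepsilon_{j}^{(m)}(x,y)$ form a basis of $V_{x,y}$, so $\varepsilon(x,y)$ is a nonzero generator of $\ex{\b g{}}V_{x,y}$; wedging with this element identifies the fiber $\ex i{(\g/V_{x,y})}$ of $E_{i}$ with the fiber $\ex i{\g}\wedge\ex{\b g{}}V_{x,y}$ of the codomain, via the canonical decomposition of $\ex{i+\b g{}}{\g}$ associated to $\g=E'\oplus V_{x,y}$. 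Thus the induced map is an isomorphism on every fiber over $\Omega_{\goth g}$, and since $E_{i}$ is torsion free by construction and $\Omega_{\goth g}$ is dense, the kernel is zero on all of $\gg g{}$.

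For part (ii), I would apply $-\otimes_{\sgg g{}}E_{i}$ to the short exact sequence $0\to\bi g{}\to\tk{\k}{\sgg g{}}{\goth g}\to E\to 0$ from Lemma~\ref{ltp1}(i). Right exactness of the tensor product immediately yields the claimed three-term sequence after the identification $\tk{\k}{\sgg g{}}{\goth g}\otimes_{\sgg g{}}E_{i}\cong\tk{\k}{\goth g}{E_{i}}$, so the only remaining task is to verify that the leftmost arrow is injective. Since $\bi g{}$ is free of rank $\b g{}$, $\bi g{}\otimes_{\sgg g{}}E_{i}$ is isomorphic to $E_{i}^{\oplus\b g{}}$ and is therefore torsion free; consequently it embeds into its localization at the generic point of $\gg g{}$. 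Over the fraction field of $\sgg g{}$, tensoring is exact, and the arrow specializes to the tensor product of the inclusion of free modules $\bi g{}_{(0)}\hookrightarrow(\tk{\k}{\sgg g{}}{\goth g})_{(0)}$ with $(E_{i})_{(0)}$, which is plainly injective. The delicate point of the whole proposition lies in part (i), specifically the fiberwise identification on $\Omega_{\goth g}$ built on Lemma~\ref{lsc2} and the nonvanishing of $\varepsilon$ on $\Omega_{\goth g}$; once that is secured, the descent from the big open to all of $\gg g{}$ and the short exact sequence in (ii) follow with essentially no further work, thanks to the torsion-freeness built into the definition of $E_{i}$.
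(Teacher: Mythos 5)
Your proof is correct and follows essentially the same route as the paper's: you factor $\theta_i$ through the quotient, use the local splitting ${\goth g}=E'\oplus V_{x,y}$ over the affine cover of $\Omega_{\goth g}$ from Lemma~\ref{lsc2} together with the nonvanishing of $\varepsilon$ there and the torsion-freeness of $E_i$ to get bijectivity, and deduce (ii) from the defining exact sequence of $E$ plus torsion-freeness of $\tk {\sgg g{}}{\bi g{}}E_{i}$. The only cosmetic differences are that you invoke the standard right-exactness of exterior powers on quotients where the paper argues locally on the cover, and in (ii) you kill the kernel by localizing at the generic point where the paper notes that the image of ${\mathrm{Tor}}_{1}^{\sgg g{}}(E,E_{i})$ in a torsion-free module vanishes.
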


\begin{proof}
(i) For $j$ positive integer, denote by $\pi _{j}$ the canonical map from 
$\tk {\k}{\sgg g{}}\ex j{{\goth g}}$ to $\ex jE$. Let $\omega $ be in the kernel of 
$\pi _{i}$. Let $O$ be an element of the affine open cover of $\Omega _{{\goth g}}$ of 
Lemma~\ref{lsc2} and let $W$ be a subspace of ${\goth g}$ such that
$$ {\goth g} = W \oplus V_{x,y}$$
for all $(x,y)$ in $O$ so that $\pi _{1}$ induces an isomorphism 
$$ \tk {\k}{\k[O]}W \longrightarrow \tk {\sgg g{}}{\k[O]}E$$
Moreover, $\bi g{}$ is the kernel of $\pi _{1}$. Then, from the equality
$$ \tk {\k}{\k[O]}\ex i{{\goth g}} = \bigoplus _{j=0}^{i} 
\ex jW\wedge \tk {\sgg g{}}{\k[O]}\ex {i-j}{\bi g{}}$$
it results that the restriction of $\omega $ to $O$ is in 
$\tk {\sgg g{}}{\k[O]}\ex {i-1}{{\goth g}}\wedge \bi g{}$. Hence the restriction of 
$\omega \wedge \varepsilon $ to $O$ equals $0$ and $\omega $ is in the kernel of 
$\theta _{i}$ since $\ex i{{\goth g}}\wedge \ex {\b g{}}{\bi g{}}$ has no torsion as a 
submodule of a free module. As a result, $\theta _{i}$ defines through the quotient a 
morphism from $\ex iE$ to $\ex i{{\goth g}}\wedge \ex {\b g{}}{\bi g{}}$. Denote it 
by $\vartheta _{i}'$. Since $\ex i{{\goth g}}\wedge \ex {\b g{}}{\bi g{}}$ is torsion 
free, the torsion submodule of $\ex iE$ is contained in the kernel of $\vartheta '_{i}$. 
Hence $\vartheta '_{i}$ defines through the quotient a morphism from $E_{i}$ to $\ex i{{\goth g}}\wedge \ex {\b g{}}{\bi g{}}$. 
Denoting it by $\vartheta _{i}$, $\vartheta _{i}'$ and $\vartheta _{i}$ are surjective 
since too is $\theta _{i}$.

Let $\overline{\omega }$ be in the kernel of $\vartheta '_{i}$ and let $\omega $ be a 
representative of $\overline{\omega }$ in $\tk {\k}{\sgg g{}}\ex i{{\goth g}}$. Then 
$\omega \wedge \varepsilon =0$ so that the restriction of $\omega $ to the above open
subset $O$ is in $\tk {\sgg g{}}{\k[O]}\ex {i-1}{{\goth g}}\wedge \bi g{}$. As a result, 
the restriction of $\overline{\omega }$ to $O$ equals $0$. So, $\overline{\omega }$ is in 
the torsion submodule of $\ex iE$, whence the assertion.

(ii) By definition, the sequence 
$$ 0 \longrightarrow \bi g{} \longrightarrow \tk {\k}{\sgg g{}}{\goth g} 
\longrightarrow E \longrightarrow 0$$ 
is exact. Then the sequence
$$ {\mathrm {Tor}}_{1}^{\sgg g{}}(E,E_{i}) \longrightarrow \tk {\sgg g{}}{\bi g{}}E_{i}
\longrightarrow \tk {\k}{{\goth g}}E_{i} \longrightarrow \tk {\sgg g{}}{E}E_{i} 
\longrightarrow 0$$
is exact. By definition, $E_{i}$ is torsion free. As a result, 
$\tk {\sgg g{}}{\bi g{}}E_{i}$ is torsion free since $\bi g{}$ is a free module. Then, 
since ${{\mathrm {Tor}}}_{1}^{\sgg g{}}(E,E_{i})$ is a torsion module, its image in 
$\tk {\sgg g{}}{\bi g{}}E_{i}$ equals $0$, whence the assertion.
\end{proof}

\subsection{} \label{tp2}
For $i$ positive integer, $\dv ..$ has a canonical extension to 
$\tk {\k}{\sgg g{}}\ex i{{\goth g}}$ denoted again by $\dv ..$.

\begin{lemma}\label{ltp2}
Let $i$ be a positive integer. Let $T_{i}$ be the torsion module of 
$\tk {\sgg g{}}{E}E_{i}$ and let $T'_{i}$ be its inverse image by the canonical 
morphism $\tk {\k}{{\goth g}}E_{i}\rightarrow \tk {\sgg g{}}{E}E_{i}$.

{\rm (i)} The canonical morphism from $\ex iE$ to $\ex i{\bk g{}^{*}}$ defines through
the quotient an embedding of $E_{i}$ into $\ex i{\bk g{}^{*}}$.

{\rm (ii)} The module of $T'_{i}$ is the intersection of 
$\tk {\k}{{\goth g}}E_{i}$ and $\tk {\sgg g{}}{\bi g{}}{\ex i{\bk g{}^{*}}}$.

{\rm (iii)} The module $T'_{i}$ is isomorphic to 
${\mathrm {Hom}}_{\sgg g{}}(E^{\#},E_{i})$.
\end{lemma}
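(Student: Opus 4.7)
My plan is to prove the three parts in order, using (i) to embed $E_{i}$ into the free module $\ex i{\bk g{}^{*}}$ and then working throughout inside the ambient module $\tk {\k}{{\goth g}}{\ex i{\bk g{}^{*}}}$. For (i), I would combine the embedding $E\hookrightarrow \bk g{}^{*}$ of Lemma~\ref{ltp1},(iv) with the freeness of $\bk g{}$ from Proposition~\ref{psc2},(ii), which makes $\bk g{}^{*}$ and hence $\ex i{\bk g{}^{*}}$ free and torsion free. A rank count using the defining sequence $0\to \bi g{}\to \tk {\k}{\sgg g{}}{\goth g}\to E\to 0$, together with $\dim {\goth g}=2n+\rg$ and $\mathrm{rk}\,\bi g{}=\b g{}=n+\rg$, gives the generic rank of $E$ as $n=\b g{}-\rg$, matching that of $\bk g{}^{*}$; hence $E\hookrightarrow \bk g{}^{*}$ localises to an isomorphism, so $\ex iE\to \ex i{\bk g{}^{*}}$ has torsion kernel, and torsion-freeness of the target forces the kernel to equal the torsion submodule of $\ex iE$. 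The induced map $E_{i}\to \ex i{\bk g{}^{*}}$ is then the required embedding.

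For (ii), since $\ex i{\bk g{}^{*}}$ is flat, the defining sequence of $E$ tensored with it remains exact:
$$ 0\longrightarrow \tk {\sgg g{}}{\bi g{}}{\ex i{\bk g{}^{*}}} \longrightarrow \tk {\k}{{\goth g}}{\ex i{\bk g{}^{*}}} \longrightarrow \tk {\sgg g{}}{E}{\ex i{\bk g{}^{*}}} \longrightarrow 0,$$
and the rightmost term is torsion free, being a finite direct sum of copies of $E$. For the forward inclusion, take $\omega \in T'_{i}$; some $p\omega$ lies in $\tk {\sgg g{}}{\bi g{}}E_{i}\subseteq \tk {\sgg g{}}{\bi g{}}{\ex i{\bk g{}^{*}}}$, so by torsion-freeness of the quotient $\omega$ itself is in $\tk {\sgg g{}}{\bi g{}}{\ex i{\bk g{}^{*}}}$. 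For the reverse inclusion, assume $\omega \in \tk {\k}{{\goth g}}E_{i}\cap \tk {\sgg g{}}{\bi g{}}{\ex i{\bk g{}^{*}}}$; its image in $\tk {\sgg g{}}{E}{\ex i{\bk g{}^{*}}}$ is zero, and applying $\tk {\sgg g{}}{E}{-}$ to $0\to E_{i}\to \ex i{\bk g{}^{*}}\to \ex i{\bk g{}^{*}}/E_{i}\to 0$ identifies the kernel of $\tk {\sgg g{}}{E}E_{i}\to \tk {\sgg g{}}{E}{\ex i{\bk g{}^{*}}}$ as a quotient of ${\mathrm {Tor}}_{1}^{\sgg g{}}(E,\ex i{\bk g{}^{*}}/E_{i})$. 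Since $\ex i{\bk g{}^{*}}/E_{i}$ is torsion by (i) and Tor commutes with localisation, this Tor is torsion; hence the image of $\omega$ in $\tk {\sgg g{}}{E}E_{i}$ is torsion, i.e.\ lies in $T_{i}$, so $\omega \in T'_{i}$.

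For (iii), I would apply ${\mathrm {Hom}}_{\sgg g{}}(-,E_{i})$ to $0\to \bk g{}\to \tk {\k}{\sgg g{}}{\goth g}\to E^{\#}\to 0$. Using $\dv ..$ to identify ${\goth g}^{*}\cong {\goth g}$, the isomorphism ${\mathrm {Hom}}_{\sgg g{}}(\tk {\k}{\sgg g{}}{\goth g},E_{i})\cong \tk {\k}{{\goth g}}E_{i}$ sends $v\otimes e$ to the map $\varphi \mapsto \dv \varphi v \cdot e$. Hence ${\mathrm {Hom}}_{\sgg g{}}(E^{\#},E_{i})$ consists of $\omega \in \tk {\k}{{\goth g}}E_{i}$ with $\dv \mu \omega =0$ in $E_{i}$ for all $\mu \in \bk g{}$, equivalently, by (i), with $\dv \mu \omega =0$ in $\ex i{\bk g{}^{*}}$. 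Decomposing along an $\sgg g{}$-basis of the free module $\ex i{\bk g{}^{*}}$ reduces this condition componentwise to the scalar orthogonality pairing $\bk g{}\times \tk {\k}{\sgg g{}}{\goth g}\to \sgg g{}$, whose solution set is $\bi g{}$ by Proposition~\ref{psc2},(iii); so the Hom equals $\tk {\k}{{\goth g}}E_{i}\cap \tk {\sgg g{}}{\bi g{}}{\ex i{\bk g{}^{*}}}$, which is $T'_{i}$ by (ii).

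The main obstacle is part (ii): the two inclusions sit at opposite ends of the same exact sequence but use opposite mechanisms (torsion-freeness of $\tk {\sgg g{}}{E}{\ex i{\bk g{}^{*}}}$ for one, torsionness of a $\mathrm{Tor}$ term coming from $\ex i{\bk g{}^{*}}/E_{i}$ for the other), and one has to keep several tensor factorisations straight. Once (ii) is in place, (iii) is largely a repackaging via the orthogonality description of $\bi g{}$.
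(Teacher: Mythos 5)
Your argument is correct, and for parts (ii) and (iii) it is essentially the paper's route: you tensor the defining sequence $0\to \bi g{}\to \tk {\k}{\sgg g{}}{\goth g}\to E\to 0$ both with $E_{i}$ (Proposition~\ref{ptp1},(ii)) and with the free module $\ex i{\bk g{}^{*}}$ to get the two-row diagram with embeddings on the left and middle columns, and then, for (iii), apply ${\mathrm {Hom}}_{\sgg g{}}(-,E_{i})$ to $0\to \bk g{}\to \tk {\k}{\sgg g{}}{\goth g}\to E^{\#}\to 0$ and use the orthogonality $\bk g{}^{\perp}=\bi g{}$ of Proposition~\ref{psc2},(iii); your componentwise decomposition along a basis of $\ex i{\bk g{}^{*}}$ just spells out the equivalence the paper states in one line. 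In fact you make explicit two points the paper leaves implicit in (ii): that $\tk {\sgg g{}}{E}{\ex i{\bk g{}^{*}}}$ is torsion free (a finite direct sum of copies of $E$, Lemma~\ref{ltp1},(ii)) and that the kernel of $\tk {\sgg g{}}{E}E_{i}\to \tk {\sgg g{}}{E}{\ex i{\bk g{}^{*}}}$ is torsion (image of ${\mathrm {Tor}}_{1}^{\sgg g{}}(E,\ex i{\bk g{}^{*}}/E_{i})$ with torsion quotient), which together identify $T_{i}$ with that kernel --- exactly what is needed to read off $T'_{i}$ as the stated intersection. The only genuine divergence is in (i): the paper shows a kernel element of $\ex iE\to \ex i{\bk g{}^{*}}$ is torsion by restricting to the affine cover of $\Omega _{{\goth g}}$ from Lemma~\ref{lsc2}, where the decomposition ${\goth g}=W\oplus V_{x,y}$ and the orthogonality of $V_{x,y}$ and $C_{x,y}$ force the representative into $\ex {i-1}{{\goth g}}\wedge \bi g{}$ locally; you instead count generic ranks ($\mathrm{rk}\,E=\b g{}-\rg=\mathrm{rk}\,\bk g{}^{*}$) so that the embedding $E\hookrightarrow \bk g{}^{*}$ of Lemma~\ref{ltp1},(iv) becomes an isomorphism over the fraction field, making the kernel torsion. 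Your version is shorter and purely module-theoretic, while the paper's local argument reuses machinery it needs elsewhere anyway; both correctly conclude that the kernel is exactly the torsion submodule, so $E_{i}$ embeds in $\ex i{\bk g{}^{*}}$, and your rank count also furnishes the torsionness of $\ex i{\bk g{}^{*}}/E_{i}$ that your step (ii) invokes.
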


\begin{proof}
(i) According to Lemma~\ref{ltp1},(iii), there is a canonical morphism from 
$\ex iE$ to $\ex i{\bk g{}^{*}}$. Let $\overline{\omega }$ be in its kernel and let 
$\omega $ be a representative of $\overline{\omega }$ in 
$\tk {\k}{\sgg g{}}\ex i{{\goth g}}$. Then $\omega $ is orthogonal to $\ex i{\bk g{}}$
with respect to $\dv ..$. So for $O$ as in Lemma~\ref{lsc2}, the restriction 
of $\omega $ to $O$ is in $\tk {\sgg g{}}{\k[O]}\ex {i-1}{{\goth g}}\wedge \bi g{}$. 
Hence the restriction of $\overline{\omega }$ to $O$ equals $0$. In other words, 
$\overline{\omega }$ is in the torsion module of $\ex iE$, whence the assertion since 
$\ex i{\bk g{}^{*}}$ is a free module.

(ii) Since $\ex i{\bk g{}^{*}}$ is a free module, by Proposition~\ref{ptp1},(ii), 
there is a morphism of short exact sequences
$$\xymatrix{
0 \ar[r] & \tk {\sgg g{}}{\bi g{}}E_{i} \ar[r] \ar[d] & 
\tk {\k}{{\goth g}}E_{i} \ar[r] \ar[d] & \tk {\sgg g{}}{E}E_{i} \ar[r] \ar[d] & 0 
\\ 0 \ar[r] & \tk {\sgg g{}}{\bi g{}}{\ex i{\bk g{}^{*}}} \ar[r]  & 
\tk {\k}{{\goth g}}{\ex i{\bk g{}^{*}}} \ar[r]  & 
\tk {\sgg g{}}{E}{\ex i{\bk g{}^{*}}} \ar[r]  & 0 }$$ 
Moreover, the two first vertical arrows are embeddings. Hence $T'_{i}$ is the 
intersection of $\tk {\k}{{\goth g}}E_{i}$ and 
$\tk {\sgg g{}}{\bi g{}}{\ex i{\bk g{}^{*}}}$ in 
$\tk {\k}{{\goth g}}\ex i{\bk g{}^{*}}$.

(iii) According to the identification of ${\goth g}$ with its dual, 
$\tk {\k}{{\goth g}}E_{i}={\mathrm {Hom}}_{\k}({\goth g},E_{i})$. Moreover, according 
to the short exact sequence of $\sgg g{}$-modules
$$ 0 \longrightarrow \bk g{} \longrightarrow \tk {\k}{\sgg g{}}{\goth g} \longrightarrow 
E^{\#} \longrightarrow 0$$ 
the sequence of $\sgg g{}$-modules
$$ 0 \longrightarrow {\mathrm {Hom}}_{\sgg g{}}(E^{\#},E_{i}) \longrightarrow 
{\mathrm {Hom}}_{\k}({\goth g},E_{i}) \longrightarrow 
{\mathrm {Hom}}_{\sgg g{}}(\bk g{},E_{i}) \longrightarrow 
{\mathrm {Ext}}^{1}_{\sgg g{}}(E^{\#},E_{i})$$
is exact. For $\varphi $ in 
${\mathrm {Hom}}_{\sgg g{}}(\tk {\k}{\sgg g{}}{\goth g},E_{i})$, $\varphi $ is 
in the kernel of the third arrow if and only if $\bk g{}$ is contained in the kernel 
of $\varphi $. On the other hand, according to the identification of ${\goth g}$ and its
dual, ${\mathrm {Hom}}_{\k}({\goth g},\ex i{\bk g{}^{*}}$ identifies with 
$\tk {\k}{{\goth g}}\ex i{\bk g{}^{*}}$. By Proposition~\ref{psc2}, $\bi g{}$ is the 
orthogonal complement of $\bk g{}$ in $\tk {\k}{\sgg g{}}{\goth g}$ and 
$\bk g{}^{*}$ is a free $\sgg g{}$-module. So, for $\psi $ in 
${\mathrm {Hom}}_{\sgg g{}}(\tk {\k}{\sgg g{}}{\goth g},\ex i{\bk g{}^{*}})$, 
$\psi $ equals $0$ on $\bk g{}$ if and only if it is in 
$\tk {\sgg g{}}{\bi g{}}{\ex i{\bk g{}^{*}}}$, whence the assertion by (ii). 
\end{proof}

The following corollary results from Lemma~\ref{ltp2}.

\begin{coro}\label{ctp2}
Let $i$ be a positive integer and let $\overline{E_{i}}$ be the quotient of 
$\tk {\sgg g{}}E{E_{i}}$ by its torsion module. Then the short sequence of 
$\sgg g{}$-modules
$$ 0 \longrightarrow {\mathrm {Hom}}_{\sgg g{}}(E^{\#},E_{i}) \longrightarrow 
\tk {\k}{{\goth g}}E_{i} \longrightarrow \overline{E_{i}} \longrightarrow 0$$
is exact.
\end{coro}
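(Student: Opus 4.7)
The plan is to deduce the corollary formally from Proposition~\ref{ptp1},(ii) together with Lemma~\ref{ltp2},(iii), the point being that $\overline{E_i}$ is by construction the image of $\tk{\k}{\goth g}{E_i}$ in $\tk{\sgg g{}}E{E_i}$ modulo its torsion, and $T'_i$ is by definition the preimage of that torsion.

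First I would take the short exact sequence of Proposition~\ref{ptp1},(ii),
$$ 0 \longrightarrow \tk{\sgg g{}}{\bi g{}}{E_i} \longrightarrow \tk{\k}{\goth g}{E_i} \longrightarrow \tk{\sgg g{}}{E}{E_i} \longrightarrow 0,$$
and compose the surjection $\tk{\k}{\goth g}{E_i} \twoheadrightarrow \tk{\sgg g{}}{E}{E_i}$ with the quotient map $\tk{\sgg g{}}{E}{E_i} \twoheadrightarrow \overline{E_i} = \tk{\sgg g{}}{E}{E_i}/T_i$. The resulting map $\tk{\k}{\goth g}{E_i} \twoheadrightarrow \overline{E_i}$ is surjective, and its kernel is exactly the preimage of $T_i$, which is $T'_i$ by the definition recalled in Lemma~\ref{ltp2}.

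Next I would apply Lemma~\ref{ltp2},(iii) to identify $T'_i$ canonically with $\mathrm{Hom}_{\sgg g{}}(E^{\#},E_i)$. Substituting this identification in the exact sequence $0 \to T'_i \to \tk{\k}{\goth g}{E_i} \to \overline{E_i} \to 0$ yields precisely the claimed short exact sequence.

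There is no genuine obstacle in this argument: once the inverse image $T'_i$ has been analysed in Lemma~\ref{ltp2}, the corollary is purely a matter of tracking kernels along the composition $\tk{\k}{\goth g}{E_i} \to \tk{\sgg g{}}{E}{E_i} \to \overline{E_i}$. The only small point to watch is that the surjectivity of the second map is automatic (it is a quotient), so no further hypothesis on $E_i$ is needed beyond what is already in force.
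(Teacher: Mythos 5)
Your argument is correct and is essentially the paper's own proof: the paper derives the corollary directly from Lemma~\ref{ltp2}, i.e.\ from the fact that the kernel of the surjection $\tk {\k}{{\goth g}}E_{i}\rightarrow \overline{E_{i}}$ is by definition $T'_{i}$, which Lemma~\ref{ltp2},(iii) identifies with ${\mathrm {Hom}}_{\sgg g{}}(E^{\#},E_{i})$. Your use of Proposition~\ref{ptp1},(ii) for the surjectivity is harmless (right-exactness of the tensor product would already suffice), so there is nothing to add.
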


\subsection{} \label{tp3}
Denote by ${\mathrm {Mod}}_{\sgg g{}}$ the category of finite $\sgg g{}$-modules. Let 
$\iota $ be the morphism 
$$\tk {\k}{\sgg g{}}{\goth g} \longrightarrow \bk g{}^{*}, \qquad 
v \longmapsto ( \mu \mapsto \dv v{\mu }) $$

\begin{lemma}\label{ltp3}
Let $\bj g{}$ be the quotient of $\bk g{}^{*}$ by $\iota (\tk {\k}{\sgg g{}}{\goth g})$.
Then the two functors $\tk {\sgg g{}}{\bj g{}}\bullet$ and 
${\mathrm {Ext}}^{1}_{\sgg g{}}(E^{\#},\bullet)$ of the category
${\mathrm {Mod}}_{\sgg g{}}$ are isomorphic.
\end{lemma}

\begin{proof}
For $d$ nonnegative integer, denote by ${\mathrm {Mod}}_{\sgg g{}}(d)$ the full
subcategory of ${\mathrm {Mod}}_{\sgg g{}}$ whose objects are the modules of projective 
dimension at most $d$. Prove by induction on $d$ that the restrictions to 
${\mathrm {Mod}}_{\sgg g{}}(d)$ of the functors
${\mathrm {Ext}}^{1}_{\sgg g{}}(E^{\#},\bullet)$ and $\tk {\sgg g{}}{\bj g{}}\bullet$
are isomorphic. Let $M$ be a finite $\sgg g{}$-module. Denoting by $d$ its projective 
dimension, there is a short exact sequence
$$ 0 \longrightarrow Z \longrightarrow P \longrightarrow M \longrightarrow 0$$
with $Z$ a module of projective dimension $d-1$ if $d>0$ and $Z=0$ otherwise. 

Suppose $d=0$. Then, from the short exact sequence
$$ 0 \longrightarrow \bk g{} \longrightarrow \tk {\k}{\sgg g{}}{\goth g} \longrightarrow 
E^{\#} \longrightarrow 0$$
one deduces the exact sequence
$$ 0 \longrightarrow {\mathrm {Hom}}_{\sgg g{}}(E^{\#},M) \longrightarrow 
{\mathrm {Hom}}_{\k}({\goth g},M) \longrightarrow 
{\mathrm {Hom}}_{\sgg g{}}(\bk g{},M) \longrightarrow 
{\mathrm {Ext}}^{1}_{\sgg g{}}(E^{\#},M) \longrightarrow 0 .$$
Since $\bk g{}$ is a free module, ${\mathrm {Hom}}_{\sgg g{}}(\bk g{},M)$ is functorially
isomorphic to $\tk {\sgg g{}}{\bk g{}^{*}}M$. Then by the right exactness of the functor
$\tk {\sgg g{}}{\bj g{}}\bullet$, there is an isomorphism of exact sequences
$$\xymatrix{{\mathrm {Hom}}_{\k}({\goth g},M) \ar[r] & 
{\mathrm {Hom}}_{\sgg g{}}(\bk g{},M) \ar[r] & 
{\mathrm {Ext}}^{1}_{\sgg g{}}(E^{\#},M) \ar[r] & 0 \\
\tk {\k}{{\goth g}}M \ar[r] \ar[u]^{\delta _{0}} & 
\tk {\sgg g{}}{\bk g{}^{*}}M \ar[r] \ar[u]^{\delta _{1}} &
\tk {\sgg g{}}{\bj g{}}M \ar[r] \ar[u]^{\delta } & 0 }$$
Since the two sequences depends functorially on $M$, from the isomorphisms of functors
$$ \tk {\k}{{\goth g}}\bullet \longrightarrow {\mathrm {Hom}}_{\k}({\goth g},\bullet),
\quad \tk {\sgg g{}}{\bk g{}^{*}}\bullet \longrightarrow 
{\mathrm {Hom}}_{\sgg g{}}(\bk g{},\bullet),$$
we deduce that the restrictions to ${\mathrm {Mod}}_{\sgg g{}}(0)$ of the functors 
${\mathrm {Ext}}^{1}_{\sgg g{}}(E^{\#},\bullet)$ and $\tk {\sgg g{}}{\bj g{}}\bullet$
are isomorphic. 

Suppose the statement true for $d-1$. Setting $Q := \iota (\tk {\k}{\sgg g{}}{\goth g})$,
one has two short exact sequences
$$ 0 \longrightarrow \bi g{} \longrightarrow \tk {\k}{\sgg g{}}{\goth g} \longrightarrow 
Q \longrightarrow 0, \qquad 
0 \longrightarrow Q \longrightarrow \bk g{}^{*} \longrightarrow \bj g{}
\longrightarrow 0 .$$
Since $E^{\#}$ has projective dimension at most $1$ by Lemma~\ref{ltp1}, 
${\mathrm {Ext}}^{2}_{\sgg g{}}(E^{\#},Z)=0$. Then, by induction hypothesis, one has a 
commutative diagram
$$\xymatrix{ & 0 & 0 & 0 & \\
& {\mathrm {Ext}}^{1}_{\sgg g{}}(E^{\#},Z) \ar[r]^{\dd} \ar[u] & 
{\mathrm {Ext}}^{1}_{\sgg g{}}(E^{\#},P) \ar[r]^{\dd} \ar[u] & 
{\mathrm {Ext}}^{1}_{\sgg g{}}(E^{\#},M) \ar[r]\ar@{.>}[u]   &  0 \\
0 \ar[r] & \tk {\sgg g{}}{\bk g{}^{*}}Z \ar[r]^{\dd} \ar[u]^{\delta } &
\tk {\sgg g{}}{\bk g{}^{*}}P \ar[r]^{\dd} \ar[u]^{\delta } &
\tk {\sgg g{}}{\bk g{}^{*}}M \ar[r] \ar@{.>}[u]^{\delta _{M}} & 0 \\
& \tk {\sgg g{}}{Q}Z \ar[r]^{\dd} \ar[u]^{\delta } &
\tk {\sgg g{}}{Q}P \ar[r]^{\dd} \ar[u]^{\delta } &
\tk {\sgg g{}}{Q}M \ar[r] \ar[u]^{\delta } & 0 } $$
with exact lines and columns since $\bk g{}^{*}$ is a free module. Let $a$ and $a'$ be in
$\tk {\sgg g{}}{\bk g{}^{*}}P$ such that $\dd a = \dd a'$. Then $a-a'=\dd a_{1}$ with 
$a_{1}$ in $\tk {\sgg g{}}{\bk g{}^{*}}Z$ so that 
$$ \dd \rond \delta a - \dd \rond \delta a' =  \dd \rond \delta \rond \dd a_{1} = 0$$  
whence a morphism 
$$\tk {\sgg g{}}{\bk g{}^{*}}M  \stackrel{\delta _{M}}\longrightarrow  
{\mathrm {Ext}}^{1}_{\sgg g{}}(E^{\#},M) $$
uniquely defined by the equality $\delta _{M}\rond \dd = \dd \rond \delta $.

Let $a$ be in ${\mathrm {Ext}}^{1}_{\sgg g{}}(E^{\#},M)$. Then 
$$ a = \dd \rond \delta a_{1} = \delta _{M}\rond \dd a_{1} \quad  \text{with} \quad
a_{1} \in \tk {\sgg g{}}{\bk g{}^{*}}P .$$
Hence $\delta _{M}$ is surjective. Let $b$ be in the kernel of $\delta _{M}$. Then
$$ b = \dd b_{1}, \quad \dd \rond \delta b_{1} = 0, \quad 
\delta b_{1} = \dd \rond \delta b_{2} \quad  \text{with} \quad 
b_{1} \in \tk {\sgg g{}}{\bk g{}^{*}}P, \quad  b_{2} \in \tk {\sgg g{}}{\bk g{}^{*}}Z .$$
so that $b_{1} -\dd b_{2} = \delta b_{3}$ with $b_{3}$ in $\tk {\sgg g{}}QP$, whence
$b=\delta \rond \dd b_{3}$. As a result, the above diagram is canonically completed
by an exact third column and one has an isomorphism of short exact sequences 
$$\xymatrix{{\mathrm {Ext}}^{1}_{\sgg g{}}(E^{\#},Z) \ar[r] & 
{\mathrm {Ext}}^{1}_{\sgg g{}}(E^{\#},P) \ar[r] & 
{\mathrm {Ext}}^{1}_{\sgg g{}}(E^{\#},M) \ar[r] & 0 \\
\tk {\sgg g{}}{\bj g{}}Z \ar[r] \ar[u] & \tk {\sgg g{}}{\bj g{}}P \ar[r] \ar[u] &
\tk {\sgg g{}}{\bj g{}}M \ar[r] \ar[u] & 0 }.$$
Since the two sequences depends functorially on the short exact sequence
$$ 0 \longrightarrow Z \longrightarrow P \longrightarrow M \longrightarrow 0$$
and since the restrictions to ${\mathrm {Mod}}_{\sgg g{}}(d-1)$ of the two functors
${\mathrm {Ext}}^{1}_{\sgg g{}}(E^{\#},\bullet)$ and $\tk {\sgg g{}}{\bj g{}}\bullet$
are isomorphic, too is their restrictions to ${\mathrm {Mod}}_{\sgg g{}}(d)$, 
whence the lemma since all object of ${\mathrm {Mod}}_{\sgg g{}}$ has a finite projective
dimension.
\end{proof}

From the exact sequence,
$$ 0 \longrightarrow \bi g{} \longrightarrow \tk {\k}{\sgg g{}}{\goth g} 
\longrightarrow \bk g{}^{*} \longrightarrow \bj g{} \longrightarrow 0 ,$$
we deduce the graded homology complex,
$$ 0 \longrightarrow \tk {\sgg g{}}{\bi g{}}E_{i} \longrightarrow 
\tk {\k}{{\goth g}}E_{i} \longrightarrow \tk {\sgg g{}}{\bk g{}^{*}}E_{i}
\longrightarrow \tk {\sgg g{}}{\bj g{}}E_{i} \longrightarrow 0$$
denoted by $C_{\bullet}$. For $i$ positive integer, let $d_{i}$ and $d'_{i}$ be the 
projective dimensions of $E_{i}$ and ${\mathrm {Hom}}_{\sgg g{}}(E^{\#},E_{i})$. 

\begin{lemma}\label{l2tp3}
Let $Q$ be the space of cycles of degree $2$ of the complex $C_{\bullet}$.

{\rm (i)} Denoting by $d''_{i}$ the projective dimension of 
${\mathrm {Ext}}^{1}_{\sgg g{}}(E^{\#},E_{i})$, $d'_{i}$ is at most 
$\sup \{d''_{i}-2,d_{i}\}$.

{\rm (ii)} The complex $C_{\bullet}$ has no homology in degree $0$, $1$ and $3$. 
Moreover, $Q$ identifies with ${\mathrm {Hom}}_{\sgg g{}}(E^{\#},E_{i})$.

{\rm (iii)} The module ${\mathrm {Hom}}_{\sgg g{}}(E^{\#},E_{i})$ has projective 
dimension at most $d_{i}$.
\end{lemma}

\begin{proof}
(i) From the short exact sequence
$$ 0 \longrightarrow \bk g{} \longrightarrow \tk {\k}{\sgg g{}}{{\goth g}} 
\longrightarrow E^{\#} \longrightarrow 0$$
one deduces the exact sequence
$$ 0 \longrightarrow {\mathrm {Hom}}_{\sgg g{}}(E^{\#},E_{i}) \longrightarrow 
{\mathrm {Hom}}_{\k}({\goth g},E_{i}) \longrightarrow 
{\mathrm {Hom}}_{\sgg g{}}(\bk g{},E_{_{i}}) \longrightarrow 
{\mathrm {Ext}}^{1}_{\sgg g{}}(E^{\#},E_{i}) \longrightarrow 0$$
whence the two short exact sequences
$$0 \longrightarrow {\mathrm {Hom}}_{\sgg g{}}(E^{\#},E_{i}) \longrightarrow 
{\mathrm {Hom}}_{\k}({\goth g},E_{i}) \longrightarrow Z \longrightarrow 0$$ 
$$0 \longrightarrow Z \longrightarrow 
{\mathrm {Hom}}_{\sgg g{}}(\bk g{},E_{_{i}}) \longrightarrow 
{\mathrm {Ext}}^{1}_{\sgg g{}}(E^{\#},E_{i}) \longrightarrow 0$$
with $Z$ the image of the arrow 
$$ {\mathrm {Hom}}_{\k}({\goth g},E_{i}) \longrightarrow 
{\mathrm {Hom}}_{\sgg g{}}(\bk g{},E_{_{i}}) $$
Denoting by $d$ the projective dimension of $Z$, one deduces the inequalities
$$d'_{i} \leq \sup \{d-1,d_{i}\}, \qquad d \leq \sup\{d''_{i}-1,d_{i}\} $$
since $\bk g{}$ is a free module, whence the assertion.

(ii) By right exactness of the functor $\tk {\sgg g{}}{\bullet}E_{i}$, $C_{\bullet}$
has no homology in degree $0$ and $1$. Moreover, its space of cycles of degree $3$ is a 
torsion submodule of $C_{3}$. Since $E_{i}$ is torsion free and since $\bi g{}$ is free, 
$C_{3}$ has no torsion. Hence $C_{\bullet}$ has no homology in degree $3$. According to
Lemma~\ref{ltp2},(ii) and (iii), ${\mathrm {Hom}}_{\sgg g{}}(E^{\#},E_{i})$ identifies
with a submodule of $\tk {\k}{{\goth g}}E_{i}$. According to these identifications, $Q$
is the space of morphisms from $\tk {\k}{\sgg g{}}{\goth g}$ to $E_{i}$, equal to $0$ on 
$\bk g{}$, that is $Q={\mathrm {Hom}}_{\sgg g{}}(E^{\#},E_{i})$.

(iii) By (ii), one has a short exact sequence
$$ 0 \longrightarrow {\mathrm {Hom}}_{\sgg g{}}(E^{\#},E_{i}) \longrightarrow 
\tk {\sgg g{}}{\bk g{}^{*}}E_{i} 
\longrightarrow \tk {\sgg g{}}{\bj g{}}E_{i} \longrightarrow 0 .$$
So, $\tk {\sgg g{}}{\bj g{}}E_{i}$ has projective dimension at most 
$\sup \{d'_{i}+1,d_{i}\}$. According to Lemma~\ref{ltp3}, $\tk {\sgg g{}}{\bj g{}}E_{i}$
and ${\mathrm {Ext}}^{1}_{\sgg g{}}(E^{\#},E_{i})$ are isomorphic. So by (i),
$$ d'_{i} \leq \sup \{d'_{i}-1,d_{i}\},$$
whence $d'_{i}\leq d_{i}$.
\end{proof}

The following corollary results from Corollary~\ref{ctp2} and Lemma~\ref{l2tp3},(iii), 
since $\bi g{}$ is free.

\begin{coro}\label{ctp3}
Let $i$ be a positive integer. Then $\overline{E_{i}}$ has projective dimension 
at most $d_{i}+1$.
\end{coro}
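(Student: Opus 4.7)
The plan is to feed the short exact sequence from Corollary~\ref{ctp2} into the standard homological inequality for projective dimensions, using the numerical bounds on the left-hand term supplied by Lemma~\ref{ltp3}. First I would write down the sequence
$$ 0 \longrightarrow {\mathrm {Hom}}_{\sgg g{}}(E^{\#},E_{i}) \longrightarrow \tk {\k}{{\goth g}}E_{i} \longrightarrow \overline{E_{i}} \longrightarrow 0 $$
and observe that the middle term is isomorphic as an $\sgg g{}$-module to a direct sum of $\dim {\goth g}$ copies of $E_{i}$, since ${\goth g}$ is a finite-dimensional $\k$-vector space. In particular its projective dimension equals $d_{i}$.

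Next I would bound the projective dimension $d'_{i}$ of the leftmost term. Lemma~\ref{ltp3}(iii) asserts $d''_{i}\leq d_{i}+2$, where $d''_{i}$ is the projective dimension of ${\mathrm {Ext}}^{1}_{\sgg g{}}(E^{\#},E_{i})$. Plugging this estimate into the inequality of Lemma~\ref{ltp3}(i) yields
$$ d'_{i} \leq \sup\{d''_{i}-2,d_{i}\} \leq d_{i}. $$

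Finally I would close the argument using the elementary fact that, for a short exact sequence $0\to A\to B\to C\to 0$ of modules over a commutative ring, one has $\mathrm{pd}(C)\leq \max\{\mathrm{pd}(A)+1,\mathrm{pd}(B)\}$, which is an immediate consequence of the long exact sequence of $\mathrm{Ext}$. Applied to our situation, this gives
$$ \mathrm{pd}(\overline{E_{i}}) \leq \max\{d'_{i}+1,d_{i}\} \leq d_{i}+1, $$
which is the desired bound. The proof is purely formal assembly: all the genuine homological content has already been packaged into Lemma~\ref{ltp3}, so I do not expect any substantive obstacle in putting the pieces together.
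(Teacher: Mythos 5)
Your proof is correct and follows exactly the route the paper intends: it cites Corollary~\ref{ctp2} together with Lemma~\ref{ltp3},(i) and (iii), and your argument is precisely the natural assembly of these, bounding $d'_{i}\leq\sup\{d''_{i}-2,d_{i}\}\leq d_{i}$, noting $\tk{\k}{{\goth g}}E_{i}\cong E_{i}^{\oplus\dim{\goth g}}$ has projective dimension $d_{i}$, and applying the standard inequality $\mathrm{pd}(C)\leq\max\{\mathrm{pd}(A)+1,\mathrm{pd}(B)\}$ to the short exact sequence. No gaps; this is essentially the same proof the paper leaves to the reader.
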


\subsection{} \label{tp4}
For $i$ a positive integer and for $M$ a $\sgg g{}$-module, let consider on 
$M^{\tens i}$ the canonical action of the symmetric group ${\goth S}_{i}$. For $\sigma $ 
in ${\goth S}_{i}$, denote by $\epsilonup (\sigma )$ its signature. Let 
$M^{\tens i}_{{\mathrm {sign}}}$ be the submodule of elements $a$ of $M^{\tens i}$ such 
that $\sigma .a=\epsilonup (\sigma )a$ for all $\sigma $ in ${\goth S}_{i}$ and let 
$\delta _{i}$ be the endomorphism of $M^{\tens i}$,
$$ a \longmapsto \delta _{i}(a) = \frac{1}{i!} \sum_{\sigma \in {\goth S}_{i}}
\epsilonup (\sigma )\sigma .a .$$ 
Then $\delta _{i}$ is a projection of $M^{\tens i}$ onto $M^{\tens i}_{{\mathrm {sign}}}$.

For $L$ submodule of $\bk g{}^{*}$, denote by $L_{i}$ the image of $L^{\tens i}$ by 
the canonical map from $L^{\tens i}$ to $(\bk g{}^{*})^{\tens i}$ and set 
$L_{i,{\mathrm {sign}}} := L_{i}\cap (\bk g{}^{*})^{\tens i}_{{\mathrm {sign}}}$. Let 
$\overline{\ex iL}$ be the quotient of $\ex iL$ by its torsion module.
For $i\geq 2$, identify ${\goth S}_{i-1}$ with the stabilizer of $i$ in 
${\goth S}_{i}$ and denote by $L_{i-1,{\mathrm {sign}},1}$ the submodule of 
elements $a$ of $L_{i}$ such that $\sigma .a = \epsilonup (\sigma )a$ for all 
$\sigma $ in ${\goth S}_{i-1}$.  

\begin{lemma}\label{ltp4}
Let $i$ be a positive integer and let $L$ be a submodule of $\bk g{}^{*}$.

{\rm (i)} The module $L_{i}$ is isomorphic to the quotient of $L^{\tens i}$ by its 
torsion module.

{\rm (ii)} The module $L_{i,{\mathrm {sign}}}$ is isomorphic to $\overline{\ex iL}$.

{\rm (iii)} For $i\geq 2$, the module $L_{i,{\mathrm {sign}}}$ is a direct factor of
$L_{i-1,{\mathrm {sign}},1}$.
 
{\rm (iv)} For $i\geq 2$, the module $L_{i-1,{\mathrm {sign}},1}$ is isomorphic to
the quotient of $\tk {\sgg g{}}{\overline{\ex {i-1}L}}L$ by its torsion module.
\end{lemma}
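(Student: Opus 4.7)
The plan is to treat the four assertions in sequence, exploiting throughout that $\bk g{}^{*}$ is a free $\sgg g{}$-module by Proposition~\ref{psc2},(ii) so that all its tensor powers are torsion free, and that the characteristic zero hypothesis makes the projectors $\delta _{j}$ and other isotypic idempotents for $\goth{S}_{j}$ available over $\sgg g{}$. For~(i), the canonical map $L^{\tens i}\to (\bk g{}^{*})^{\tens i}$ has $L_{i}$ as image by definition, and its target is torsion free, so its kernel contains the torsion submodule of $L^{\tens i}$. Conversely, localizing at the fraction field of $\sgg g{}$ turns $L\hookrightarrow \bk g{}^{*}$ into an inclusion of $\k$-vector spaces, whose tensor powers over the field remain injective; hence the kernel is exactly the torsion of $L^{\tens i}$. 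For~(ii), the projector $\delta _{i}$ identifies $L^{\tens i}_{{\mathrm {sign}}}$ with $\ex iL$ in characteristic zero; applying~(i) to the restriction of $L^{\tens i}\to L_{i}$ to this sign-isotypic summand yields a surjection $\ex iL \twoheadrightarrow L_{i,{\mathrm {sign}}}$ whose kernel is the torsion submodule of $\ex iL$, whence $L_{i,{\mathrm {sign}}}\simeq \overline{\ex iL}$.

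For~(iii), I use the coset decomposition of $\goth{S}_{i}$ over $\goth{S}_{i-1}$ given by the transpositions $\tau _{j}=(j,i)$ for $j=1,\ldots,i$ (with $\tau _{i}=1$). A direct computation then shows that for $a\in L_{i-1,{\mathrm {sign}},1}$,
$$\delta _{i}(a)\;=\;\frac{1}{i}\sum_{j=1}^{i}\epsilonup (\tau _{j})\,\tau _{j}\cdot a.$$
Since $L_{i,{\mathrm {sign}}}\subseteq L_{i-1,{\mathrm {sign}},1}$ is immediate from the definitions and $\delta _{i}$ is the identity on $L_{i,{\mathrm {sign}}}$, this formula exhibits $\delta _{i}$ as an $\sgg g{}$-linear projector from $L_{i-1,{\mathrm {sign}},1}$ onto $L_{i,{\mathrm {sign}}}$, so the latter is a direct summand of the former.

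Assertion~(iv) is the most delicate and the principal obstacle. The submodule of $L^{\tens i}$ on which $\goth{S}_{i-1}$ acts on the first $i-1$ tensor factors by the signature equals $L^{\tens (i-1)}_{{\mathrm {sign}}}\otimes L$, hence by characteristic zero identifies with $\ex {i-1}L\otimes L$. The $\goth{S}_{i-1}$-equivariant surjection $L^{\tens i}\twoheadrightarrow L_{i}$ of~(i), composed with sign-averaging over the first $i-1$ factors, yields a surjection $\ex {i-1}L\otimes L \twoheadrightarrow L_{i-1,{\mathrm {sign}},1}$ whose kernel is the intersection of this submodule with the torsion of $L^{\tens i}$, hence is torsion. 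Combining this with the fact that $\ex {i-1}L\otimes L\to \overline{\ex {i-1}L}\otimes L$ has torsion kernel and zero cokernel (so both modules have the same torsion-free quotient) gives the claim. The main bookkeeping difficulty throughout is keeping straight the commutative squares relating $L^{\tens j}$, $L_{j}$, $\ex jL$ and $\overline{\ex jL}$ for $j=i-1,i$ and verifying at each step that the relevant kernel is torsion; once~(i) is in hand, each verification is a short diagram chase.
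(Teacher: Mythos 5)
Your proof is correct and follows essentially the same route as the paper's: the characteristic-zero projector $\delta _{i}$, the identification of sign-isotypic components with exterior powers, and torsion-kernel bookkeeping against the free module $\bk g{}^{*}$; your variations are cosmetic (localizing at the fraction field of $\sgg g{}$ instead of the paper's ${\mathrm {Tor}}$-sequence in (i), the explicit coset formula for $\delta _{i}$ in (iii), and replacing the paper's auxiliary image $L'_{i}$ inside $\ex {i-1}{\bk g{}^{*}}\otimes \bk g{}^{*}$ in (iv) by the observation that a surjection with torsion kernel onto a torsion-free, respectively arbitrary, module identifies the torsion-free quotients). Only note that in (i) the localization produces vector spaces over the fraction field of $\sgg g{}$, not over $\k$ as written.
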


\begin{proof}
(i) Let $L_{1}$ and $L_{2}$ be submodules of a free module $F$ over $\sgg g{}$. From 
the short exact sequence
$$ 0 \longrightarrow L_{2} \longrightarrow F \longrightarrow F/L_{2} \longrightarrow 0$$
one deduces the exact sequence
$$ {\mathrm {Tor}}_{\sgg g{}}^{1}(L_{1},L_{2}) \longrightarrow \tk {\sgg g{}}{L_{1}}L_{2}
\longrightarrow \tk {\sgg g{}}{L_{1}}F \longrightarrow 
\tk {\sgg g{}}{L_{1}}(F/L_{2}) \longrightarrow 0 .$$ 
Since $F$ is free, $\tk {\sgg g{}}{L_{1}}F$ is torsion free. Hence the kernel of the 
second arrow is the torsion module of $\tk {\sgg g{}}{L_{1}}L_{2}$ since 
${\mathrm {Tor}}_{\sgg g{}}^{1}(L_{1},L_{2})$ is a torsion module, whence the assertion 
by induction on $i$.

(ii) There is a commutative diagram
$$\xymatrix{ L^{\tens i} \ar[r] \ar[d]_{\delta _{i}} & (\bk g{}^{*})^{\tens i} 
\ar[d]^{\delta _{i}} \\ L^{\tens i}_{{\mathrm {sign}}} \ar[r] & 
(\bk g{}^{*})^{\tens i}_{{\mathrm {sign}}}}$$
so that $L_{i,{\mathrm {sign}}}$ is the image of $L^{\tens i}_{{\mathrm {sign}}}$ by the 
canonical morphism $L^{\tens i} \longrightarrow (\bk g{}^{*})^{\tens i}$, whence a 
commutative diagram
$$\xymatrix{
L^{\tens i}_{{\mathrm {sign}}} \ar[r] \ar[d] & \ex iL \ar[d] \\
(\bk g{}^{*})^{\tens i}_{{\mathrm {sign}}} \ar[r] & \ex i{\bk g{}^{*}} }$$ 
According to (i), the kernel of the left down arrow is the torsion module of 
$L^{\tens i}_{{\mathrm {sign}}}$ so that the kernel of the right down arrow is 
the torsion module of $\ex iL$ since the horizontal arrows are isomorphisms. Moreover,
the image of $L_{i,{\mathrm {sign}}}$ in $\ex i{\bk g{}^{*}}$ is the image of 
$\ex iL$. Hence $\overline{\ex iL}$ is isomorphic to $L_{i,{\mathrm {sign}}}$.

(iii) Denote by $Q_{i}$ the kernel of the endomorphism $\delta _{i}$ of 
$(\bk g{}^{*})^{\tens i}$. Since $\delta _{i}$ is a projection onto 
$(\bk g{}^{*})^{\tens i}_{{\mathrm {sign}}}$ such that $\delta _{i}(L_{i})$ is contained 
in $L_{i,{\mathrm {sign}}}$, 
$$(\bk g{}^{*})^{\tens i} = (\bk g{}^{*})_{{\mathrm {sign}}}^{\tens i} \oplus Q_{i}, 
\qquad L_{i} = L_{i,{\mathrm {sign}}}\oplus Q_{i}\cap L_{i} $$
whence  
$$L_{i-1,{\mathrm {sign}},1} = L_{i,{\mathrm {sgn}}} \oplus 
Q_{i}\cap L_{i-1,{\mathrm {sign}},1}$$
since $L_{i,{\mathrm {sign}}}$ is a submodule of $L_{i-1,{\mathrm {sign}},1}$.

(iv) Let $L'_{i}$ be the image of $L_{i-1,{\mathrm {sign}},1}$ by the canonical morphism 
$(\bk g{}^{*})^{\tens i}\rightarrow \tk {\sgg g{}}{\ex {i-1}{\bk g{}^{*}}}\bk g{}^{*}$. 
Then $L'_{i}$ is contained in $\tk {\sgg g{}}{\ex {i-1}{\bk g{}^{*}}}L$ since 
$\tk {\sgg g{}}{\ex {i-1}{\bk g{}^{*}}}L$ is a submodule of 
$\tk {\sgg g{}}{\ex {i-1}{\bk g{}^{*}}}\bk g{}^{*}$. Moreover, the 
morphism $L_{i-1,{\mathrm {sign}},1}\rightarrow L'_{i}$ is an isomorphism since too is
the morphism 
$$\tk {\sgg g{}}{(\bk g{}^{*})^{\tens (i-1)}_{{\mathrm {sign}}}}\bk g{}^{*}
\longrightarrow \tk {\sgg g{}}{\ex {i-1}{\bk g{}^{*}}}\bk g{}^{*} .$$  
From (ii), it results the commutative diagram
$$\xymatrix{ \tk {\sgg g{}}{L^{\tens (i-1)}_{{\mathrm {sign}}}}L \ar[r] \ar[d] &
\tk {\sgg g{}}{\overline{\ex {i-1}L}}L \ar[d] \\
L_{i-1,{\mathrm {sign},1}} \ar[r] & L'_{i} }$$
with the right down arrow surjective. According to (i), the kernel of the left 
down arrow is the torsion module of 
$\tk {\sgg g{}}{L^{\tens (i-1)}_{{\mathrm {sign}}}}L$. Hence the kernel of 
the right down arrow is the torsion module of  
$\tk {\sgg g{}}{\overline{\ex {i-1}L}}L$, whence the assertion.
\end{proof}

\begin{prop}\label{ptp4}
Let $i$ be a positive integer. Then $E_{i}$ and 
$\ex i{{\goth g}}\wedge \ex {\b g{}}{\bi g{}}$ have projective dimension at most $i$.
\end{prop}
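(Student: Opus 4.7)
The plan is to induct on $i$, using Proposition~\ref{ptp1}(i) to reduce the two bounds to a single statement about $E_i$. Throughout, I fix the embedding $E\hookrightarrow \bk g{}^{*}$ supplied by Lemma~\ref{ltp1}(iv) and denote its image by $L$, so that $L\cong E$ as $\sgg g{}$-modules and the formalism of Lemma~\ref{ltp4} is available with this choice of $L$. The base case $i=1$ is immediate: by Lemma~\ref{ltp1}(ii) the module $E$ is torsion free, hence $E_{1}=E$, and Lemma~\ref{ltp1}(i) gives that $E$ has projective dimension at most $1$.

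For the inductive step, assume that $E_{i-1}$ has projective dimension at most $i-1$, with $i\ge 2$. Lemma~\ref{ltp4}(ii) yields an isomorphism $L_{i,{\mathrm{sign}}}\cong \overline{\ex iL}\cong E_{i}$, and Lemma~\ref{ltp4}(iii) realises $L_{i,{\mathrm{sign}}}$ as a direct summand of $L_{i-1,{\mathrm{sign}},1}$; hence it suffices to bound the projective dimension of $L_{i-1,{\mathrm{sign}},1}$. By Lemma~\ref{ltp4}(iv), this last module is isomorphic to the quotient of $\tk{\sgg g{}}{\overline{\ex{i-1}L}}{L}\cong \tk{\sgg g{}}{E_{i-1}}{E}$ by its torsion submodule, which is precisely the module $\overline{E_{i-1}}$ appearing in Corollary~\ref{ctp2}. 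Applying Corollary~\ref{ctp3} with $i$ replaced by $i-1$ gives that $\overline{E_{i-1}}$ has projective dimension at most $(i-1)+1=i$, closing the induction for $E_{i}$ and, via Proposition~\ref{ptp1}(i), for $\ex i{{\goth g}}\wedge \ex{\b g{}}{\bi g{}}$ as well.

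The argument is largely bookkeeping, since the substantive work is absorbed into Lemmata~\ref{ltp1}--\ref{ltp4} and Corollaries~\ref{ctp2}--\ref{ctp3}. The one point that requires genuine care is the identification $E_{i}\cong L_{i,{\mathrm{sign}}}$: one must check that Lemma~\ref{ltp4}(ii) produces an isomorphism of $\sgg g{}$-modules that is compatible with the passage from $\ex iE$ to its torsion-free quotient $E_{i}$. This uses Lemma~\ref{ltp4}(i) together with the fact that $E\hookrightarrow \bk g{}^{*}$ is an honest monomorphism, so that $\ex iE$ and $\ex iL$ agree as submodules of $\ex i{\bk g{}^{*}}$ up to torsion. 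With that identification in place, each wedge increment costs exactly one unit of projective dimension, matching the claimed bound $\mathrm{pd}\,E_{i}\le i$.
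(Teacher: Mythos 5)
Your argument is correct and is essentially the paper's own proof: induction on $i$ with base case from Lemma~\ref{ltp1},(i), the identification $E_i\cong\ex i{{\goth g}}\wedge\ex{\b g{}}{\bi g{}}$ from Proposition~\ref{ptp1},(i), and the inductive step carried by Lemma~\ref{ltp4} (with $L=E\subset\bk g{}^{*}$) together with Corollary~\ref{ctp3}. Your unwinding even makes the indexing explicit (realising $E_i$ as a direct summand of $\overline{E_{i-1}}$ and applying Corollary~\ref{ctp3} at $i-1$), which is the intended reading of the paper's more telegraphic appeal to that corollary.
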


\begin{proof}
According to Proposition~\ref{ptp1},(i), the modules $E_{i}$ and 
$\ex i{{\goth g}}\wedge \ex {\b g{}}{\bi g{}}$ are isomorphic. Prove by induction on 
$i$ that $E_{i}$ has projective dimension at most $i$. By Lemma~\ref{ltp1},(i), it is 
true for $i=1$. Suppose that it is true for $i-1$. According to Corollary~\ref{ctp3}, 
$\overline{E_{i-1}}$ has projective dimension at most $i$. By Lemma~\ref{ltp4}, for 
$L=E$, $E_{i}$ is a direct factor of $\overline{E_{i-1}}$ since $E$ is a submodule of 
$\bk g{}^{*}$ by Lemma~\ref{ltp1},(iv) and since $E_{i}=\overline{\ex iE}$. Hence 
$E_{i}$ has projective dimension at most $i$.
\end{proof}

\section{Main results} \label{mr}
Let $I_{{\goth g}}$ be the ideal of $\sgg g{}$ generated by the functions 
$(x,y) \mapsto \dv v{[x,y]}$ with $v$ in ${\goth g}$. The nullvariety of $I_{{\goth g}}$
in $\gg g{}$ is ${\cal C}({\goth g})$. Let $\dd $ be the $\sgg g{}$-derivation of the 
algebra $\tk {\k}{\sgg g{}}\ex {}{{\goth g}}$ such that $\dd v$ is the function 
$(x,y)\mapsto \dv v{[x,y]}$ on $\gg g{}$ for all $v$ in ${\goth g}$. The gradation on 
$\ex {}{{\goth g}}$ induces a gradation on $\tk {\k}{\sgg g{}}\ex {}{{\goth g}}$ so that 
$\tk {\k}{\sgg g{}}\ex {}{{\goth g}}$ is a graded homology complex.

\begin{lemma}\label{lmr}
Denote by $C_{\bullet}({\goth g})$ the graded submodule 
$\ex {}{{\goth g}}\wedge \ex {\b g{}}{\bi g{}}$ of 
$\tk {\k}{\sgg g{}}\ex {}{{\goth g}}$.

{\rm (i)} The graded module $C_{\bullet}({\goth g})$ is a graded subcomplex of 
$\tk {\k}{\sgg g{}}\ex {}{{\goth g}}$.

{\rm (ii)} The support of the homology of $C_{\bullet}({\goth g})$ is contained in 
${\cal C}({\goth g})$.
\end{lemma}

\begin{proof}
(i) Set:
$$ \varepsilon := \wedge _{i=1}^{\rg} 
\poie {\varepsilon }i{ \wedge \cdots \wedge }{i}{}{}{}{(0)}{(d_{i}-1)} .$$
Then $C_{\bullet}({\goth g})$ is the ideal of
$\tk {\k}{\sgg g{}}\ex {}{{\goth g}}$ generated by $\varepsilon $ since 
$\poie {\varepsilon }i{,\ldots,}{i}{}{}{}{(0)}{(d_{i}-1)}$, $i=1,\ldots,\rg$ is a
basis of $\bi g{}$ by Theorem~\ref{tsc1},(i). According to Theorem~\ref{tsc1},(iii), 
for $i=1,\ldots,\rg$ and for $m=0,\ldots,d_{i}-1$, $\varepsilon _{i}^{(m)}$ is a cycle of 
the complex $\tk {\k}{\sgg g{}}\ex {}{{\goth g}}$. Hence too is $\varepsilon $ and 
$C_{\bullet}({\goth g})$ is a subcomplex of $\tk {\k}{\sgg g{}}\ex {}{{\goth g}}$ as
an ideal generated by a cycle.

(ii) Let $(x_{0},y_{0})$ be in $\gg g{}\setminus {\cal C}({\goth g})$ and let 
$v$ be in ${\goth g}$ such that $\dv v{[x_{0},y_{0}]}\neq 0$. For some affine 
open subset $O$ of $\gg g{}$, containing $(x_{0},y_{0})$, $\dv v{[x,y]}\neq 0$ for all
$(x,y)$ in $O$. Then $\dd v$ is an invertible element of $\k[O]$. For 
$c$ a cycle of $\tk {\sgg g{}}{\k[O]}C_{\bullet}({\goth g})$, 
$$\dd (v\wedge c) = (\dd v)c $$
so that $c$ is a boundary of $\tk {\sgg g{}}{\k[O]}C_{\bullet}({\goth g})$. 
\end{proof}

\begin{theo}\label{tmr}
{\rm (i)} The complex $C_{\bullet}({\goth g})$ has no homology in degree bigger than 
$\b g{}$.

{\rm (ii)} The ideal $I_{{\goth g}}$ has projective dimension $2n-1$.

{\rm (iii)} The algebra $\sgg g{}/I_{{\goth g}}$ is Cohen-Macaulay.

{\rm (iv)} The projective dimension of the module 
$\ex {n}{{\goth g}}\wedge \ex {\b g{}}{\bi g{}}$ equals $n$. 
\end{theo}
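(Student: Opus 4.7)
The plan is to establish (i) as the principal homological vanishing, and then derive (ii)--(v) from it via standard Cohen--Macaulay theory together with the projective-dimension bounds already proved in Proposition~\ref{ptp4}.

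For (i), I would combine Lemma~\ref{l2mr} (support of $H_j$ lies in $\mathcal{C}({\goth g})$, codimension $2n$ in $\gg g{}$) with the codimension--projective-dimension criterion recalled in the introduction. It thus suffices to bound $\mathrm{pd}(H_j)\le 2n-2$ for $j>\b g{}$. Proceed by downward induction from $j=\b g{}+n$: under the inductive hypothesis that $H_k=0$ for $k>j$, the truncated portion of the complex becomes an exact sequence
$$0\longrightarrow C_{\b g{}+n}\longrightarrow\cdots\longrightarrow C_{j+1}\longrightarrow B_j\longrightarrow 0,$$
i.e.\ a resolution of the boundary $B_j\subset C_j$. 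Combined with Proposition~\ref{ptp4} (so $\mathrm{pd}(C_{\b g{}+k})\le k$), a standard double-complex argument yields $\mathrm{pd}(B_j)\le 2n+\b g{}-j-1$. A parallel analysis of the cycles $Z_j$ through the interlocking sequences $0\to Z_j\to C_j\to B_{j-1}\to 0$ and $0\to B_j\to Z_j\to H_j\to 0$, together with $\mathrm{pd}(C_j)\le j-\b g{}$, should yield $\mathrm{pd}(H_j)\le 2n-2$, closing the induction.

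With (i) in hand, $C_\bullet$ is a resolution of its bottom homology $H_{\b g{}}\cong\sgg g{}/I_{{\goth g}}$, via the identification using the rank-one freeness of $\ex{\b g{}}\bi g{}$ and the formula $\dd(v\wedge\varepsilon)=\dv v{[x,y]}\,\varepsilon$. Replacing each $C_{\b g{}+k}$ by a projective resolution of length at most $k$ and forming the total complex yields a projective resolution of $\sgg g{}/I_{{\goth g}}$ of length at most $2n$, so $\mathrm{pd}(\sgg g{}/I_{{\goth g}})\le 2n$. On the polynomial ring $\sgg g{}$ of dimension $4n+2\rg$, Auslander--Buchsbaum gives $\mathrm{depth}(\sgg g{}/I_{{\goth g}})\ge 2n+2\rg$; combined with $\dim(\sgg g{}/I_{{\goth g}})=\dim\mathcal{C}({\goth g})=2n+2\rg$ by Richardson, depth equals dimension and $\sgg g{}/I_{{\goth g}}$ is Cohen--Macaulay, giving (iii). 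Equality in Auslander--Buchsbaum forces $\mathrm{pd}(\sgg g{}/I_{{\goth g}})=2n$, hence $\mathrm{pd}(I_{{\goth g}})=2n-1$, proving (ii).

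For (iv), Cohen--Macaulayness precludes embedded primes and Richardson's irreducibility leaves a unique minimal prime of $\sgg g{}/I_{{\goth g}}$, so $I_{{\goth g}}$ is primary. By Lemma~\ref{lmr}, $J_{{\goth g}}/I_{{\goth g}}$ has support strictly inside $\mathcal{C}({\goth g})$; but any nonzero submodule of the Cohen--Macaulay module $\sgg g{}/I_{{\goth g}}$ has an associated prime which must be the unique minimal prime, whose variety is all of $\mathcal{C}({\goth g})$, contradicting the strict containment. Hence $J_{{\goth g}}/I_{{\goth g}}=0$, so $I_{{\goth g}}$ is radical and therefore prime. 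For (v), Proposition~\ref{ptp4} gives $\mathrm{pd}(\ex n{\goth g}\wedge\ex{\b g{}}\bi g{})\le n$; if this were strict, the same iterated projective-dimension estimate would yield $\mathrm{pd}(\sgg g{}/I_{{\goth g}})\le 2n-1$, contradicting (ii), so equality holds. The main obstacle is the projective-dimension bookkeeping in (i): the apparent circularity between $\mathrm{pd}(Z_j)$ and $\mathrm{pd}(B_{j-1})$ must be broken, and the target bound $2n-2$ is just tight enough that off-by-one losses must be carefully avoided for the codimension criterion to apply.
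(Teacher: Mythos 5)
Your reductions of (ii)--(v) to (i) are essentially correct: the Cartan--Eilenberg totalization giving $\mathrm{pd}(\sgg g{}/I_{{\goth g}})\leq 2n$, Auslander--Buchsbaum combined with $\dim {\cal C}({\goth g})=2n+2\rg$, and the strictness argument for (v) all match the paper; for (iv) your route through unmixedness of the Cohen--Macaulay quotient and its unique associated prime is a legitimate variant of the paper's appeal to Proposition~\ref{pp},(ii), and it works. The problem is (i), which carries all the weight and is not actually proved. Under your descending induction the truncation $0\to C_{\b g{}+n}\to\cdots\to C_{j+1}\to B_j\to 0$ is exact and yields $\mathrm{pd}(B_j)\leq 2n+\b g{}-j-1$, as you say; this is $\leq 2n-2$ for $j\geq \b g{}+1$, with equality at the critical bottom degree $j=\b g{}+1$. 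But from $0\to B_j\to Z_j\to H_j\to 0$ the only generic estimate is $\mathrm{pd}(H_j)\leq \max\{\mathrm{pd}(Z_j),\mathrm{pd}(B_j)+1\}$, and at $j=\b g{}+1$ the second term is already $2n-1$, missing your target $2n-2$ by one; worse, the first term is not controlled at all, since the only sequence available for $Z_j$, namely $0\to Z_j\to C_j\to B_{j-1}\to 0$, requires a bound on $\mathrm{pd}(B_{j-1})$, which would follow from exactness at degree $j$ --- precisely the statement being proved. So the circularity you yourself flag as ``the main obstacle'' is not broken, and the strategy of bounding $\mathrm{pd}(H_j)$ and then applying the codimension--projective-dimension criterion to $H_j$ cannot succeed in the form you describe.

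The paper's mechanism is genuinely different and is the content of the appendix (Lemma~\ref{lp}, Proposition~\ref{pp}, Corollary~\ref{cp}): one never bounds the projective dimension of $H_j$ or of $Z_j$. Instead, one takes projective resolutions of $Z_{j+1}$ (by induction) and of $C_{j+1}$ (by Proposition~\ref{ptp4}), forms their mapping cone --- a complex of projective modules of length at most $2n-2$ whose augmentation has image $B_j$ and whose homology is supported in ${\cal C}({\goth g})$ --- and then exploits that $Z_j$ is \emph{torsion free} (each $C_i$ is a submodule of a free module), together with normality of $\gg g{}$ and the local cohomology vanishing of Lemma~\ref{lp}, to conclude $Z_j=B_j$ directly via Proposition~\ref{pp},(ii). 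In local terms the same gain can be seen as a depth count: at a minimal prime $\mathfrak p$ of the support of $H_j$ one has $\mathrm{depth}\,(B_j)_{\mathfrak p}\geq \mathrm{height}\,\mathfrak p-(2n-2)\geq 2$ and $\mathrm{depth}\,(Z_j)_{\mathfrak p}\geq 1$, so $\mathrm{depth}\,(H_j)_{\mathfrak p}\geq 1$, impossible for a nonzero module of finite length. It is exactly this use of torsion-freeness of the cycle modules (Condition (2) of Corollary~\ref{cp}) that recovers the one unit your bookkeeping loses; without it, the codimension $2n$ of ${\cal C}({\goth g})$ is too small for the naive criterion, and your outline of (i) remains an unproved assertion.
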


\begin{proof}
(i) Let $Z$ be the space of cycles of degree $\b g{}+1$ of $C_{\bullet}({\goth g})$.
Then we deduce from $C_{\bullet}({\goth g})$ the complex
$$ 0 \longrightarrow C_{2n+\rg}({\goth g)} \longrightarrow \cdots \longrightarrow 
C_{n+\rg+2}({\goth g}) \longrightarrow Z \longrightarrow 0 .$$
According to Lemma~\ref{lmr},(ii), the support of its homology is contained in 
${\cal C}_{{\goth g}}$. In particular, its codimension in $\gg g{}$ is 
$$ 4n + 2\rg - (2n+2\rg) = 2n =  n + n-1 +1$$
According to Proposition~\ref{ptp4}, for $i=n+\rg+2,\ldots,2n+\rg$, 
$C_{i}({\goth g})$ has projective dimension at most $n$. Hence, by 
Corollary~\ref{cp}, this complex is acyclic and $Z$ has projective dimension at most 
$2n-2$, whence the assertion.

(ii) and (iii) Since $\bi g{}$ is a free module of rank $\b g{}{}$, 
$\ex {\b g{}{}}{\bi g{}}$ is a free module of rank $1$. By definition, the short sequence
$$ 0 \longrightarrow Z \longrightarrow {\goth g}\wedge \ex {\b g{}}{\bi g{}}
\longrightarrow I_{{\goth g}}\ex {\b g{}{}}{\bi g{}} \longrightarrow 0$$ 
is exact, whence the short exact sequence
$$ 0 \longrightarrow Z \longrightarrow {\goth g}\wedge \ex {\b g{}}{\bi g{}}
\longrightarrow I_{{\goth g}} \longrightarrow 0 .$$ 
Moreover, by Proposition~\ref{ptp4}, ${\goth g}\wedge \ex {\b g{}}{\bi g{}}$ 
has projective dimension at most $1$. Then, by (i), $I_{{\goth g}}$ has projective 
dimension at most $2n-1$. As a result the $\sgg g{}$-module $\sgg g{}/I_{{\goth g}}$ has 
projective dimension at most $2n$. Then by Auslander-Buchsbaum's theorem
\cite[\S 3, \no 3, Th\'eor\`eme 1]{Bou1}, the depth of the graded $\sgg g{}$-module 
$\sgg g{}/I_{{\goth g}}$ is at least
$$ 4\b g{}-2\rg - 2n = 2\b g{} $$
so that, according to \cite[\S 1, \no 3, Proposition 4]{Bou1}, the depth of
the graded algebra $\sgg g{}/I_{{\goth g}}$ is at least $2\b g{}$. In other words, 
$\sgg g{}/I_{{\goth g}}$ is Cohen-Macaulay since it has dimension $2\b g{}$. Moreover, 
since the graded algebra $\sgg g{}/I_{{\goth g}}$ has depth $2\b g{}$, the 
graded $\sgg g{}$-module $\sgg g{}/I_{{\goth g}}$ has projective dimension $2n$. Hence 
$I_{{\goth g}}$ has projective dimension $2n-1$.

(iv) By (i), $I_{{\goth g}}$ has projective dimension $2n-1$. Hence, according to 
Proposition~\ref{ptp4} and according to (ii) and Corollary~\ref{cp}, 
$\ex {n}{{\goth g}}\wedge \ex {\b g{}}{\bi g{}}$ has projective dimension $n$.
\end{proof}

\begin{theo}\label{t2mr}
The subscheme of $\gg g{}$ defined by $I_{{\goth g}}$ is Cohen-Macaulay and normal. 
Furthermore, $I_{{\goth g}}$ is a prime ideal.
\end{theo}

\begin{proof}
According to Theorem~\ref{tmr},(iii), the subscheme of $\gg g{}$ defined by
$I_{{\goth g}}$ is Cohen-Macaulay. According to \cite[Theorem 1]{Po}, it
is smooth in codimension $1$. So by Serre's normality criterion \cite[\S 1, \no 10,
Th\'eor\`eme 4]{Bou1}, it is normal. In particular, it is reduced and $I_{{\goth g}}$
is radical. According to ~\cite{Ric}, ${\cal C}({\goth g})$ is irreducible. Hence 
$I_{{\goth g}}$ is a prime ideal.
\end{proof}

\appendix

\section{Projective dimension and cohomology} \label{p}
Recall in this section classical results. Let $X$ be a Cohen-Macaulay irreducible 
affine algebraic variety and let $S$ be a closed subset of codimension $p$ of $X$. Let 
$P_{\bullet}$ be a complex of finite projective $\k[X]$-modules whose lenght 
$l$ is finite and let $\varepsilon $ be an augmentation morphism of $P_{\bullet}$ whose 
image is $R$, whence an augmented complex of $\k[X]$-modules,
$$ 0 \longrightarrow P_{l} \longrightarrow
P_{l-1} \longrightarrow \cdots \longrightarrow P_{0} \stackrel{\varepsilon }
\longrightarrow R \longrightarrow 0 .$$
Denote by ${\cal P}_{\bullet}$, ${\cal R}$, ${\cal K}_{0}$ the localizations on $X$ 
of $P_{\bullet}$, $R$, the kernel of $\varepsilon $ respectively and denote by 
${\cal K}_{i}$ the kernel of the morphism 
${\cal P}_{i}\longrightarrow {\cal P}_{i-1}$ for $i$ positive integer.

\begin{lemma} \label{lp}
Suppose that $S$ contains the support of the homology of the augmented complex  
$P_{\bullet}$. 

{\rm (i)} For all positive integer $i<p-1$ and for all projective 
$\an X{}$-module ${\cal P}$, ${\rm H}^{i}(X\setminus S,{\cal P})$ equals zero.

{\rm (ii)} For all nonnegative integer $j\leq l$ and for all positive integer 
$i<p-j$, the cohomology group 
${\rm H}^{i}(X\setminus S,{\cal K}_{l-j})$ equals zero.
\end{lemma}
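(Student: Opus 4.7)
The plan is to prove the two parts separately: part (i) by a standard local cohomology argument, and part (ii) by a descending induction along the complex.

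For \textbf{(i)}, I would invoke the local cohomology long exact sequence
$$\cdots \to H^{i}_{S}(X, \cal P) \to H^{i}(X, \cal P) \to H^{i}(X \setminus S, \cal P) \to H^{i+1}_{S}(X, \cal P) \to \cdots$$
Since $X$ is affine and $\cal P$ is quasi-coherent, $H^{i}(X, \cal P) = 0$ for $i \geq 1$, so it suffices to show $H^{i+1}_{S}(X, \cal P) = 0$ when $i < p-1$. A (finitely generated) projective $\k[X]$-module is a direct summand of a free one, hence locally free as a sheaf, so the Cohen-Macaulay hypothesis gives $\mathrm{depth}_{S}\,\cal P = \mathrm{codim}_{X}\,S = p$. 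Grothendieck's local cohomology vanishing theorem then yields $H^{j}_{S}(X, \cal P) = 0$ for all $j < p$, and taking $j = i+1 < p$ concludes this part.

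For \textbf{(ii)}, I would induct on $j$, exploiting that the restriction of the augmented complex $P_\bullet \to R$ to $X \setminus S$ is exact (its homology is supported on $S$ by hypothesis). The base case $j = 0$ is immediate: since there is no $\cal P_{l+1}$, the top kernel $\cal K_{l}$ coincides with the top homology of $\cal P_{\bullet}$, which is supported on $S$; hence $\cal K_{l}\vert_{X \setminus S} = 0$ and its cohomology vanishes in every degree. For the inductive step from $j-1$ to $j$, exactness on $X \setminus S$ produces a short exact sequence of sheaves
$$ 0 \to \cal K_{l-j+1} \to \cal P_{l-j+1} \to \cal K_{l-j} \to 0 $$
on $X \setminus S$ (the image of $\cal P_{l-j+1} \to \cal P_{l-j}$ equals $\cal K_{l-j}$ there). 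In the associated long exact sequence, the middle term $H^{i}(X\setminus S, \cal P_{l-j+1})$ vanishes for $0 < i < p - 1$ by part (i), while $H^{i+1}(X\setminus S, \cal K_{l-j+1}) = 0$ for $0 \leq i < p - j$ by the inductive hypothesis. Since $p - j \leq p - 1$ whenever $j \geq 1$, both vanishings hold throughout the range $0 < i < p - j$, giving the desired $H^{i}(X \setminus S, \cal K_{l-j}) = 0$.

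I do not anticipate a serious obstacle: this is a standard Grothendieck-style local cohomology computation combined with bookkeeping. The only subtlety is checking that the numerical ranges supplied by part (i) and the inductive hypothesis simultaneously cover the range $0 < i < p - j$ needed at each step, which is exactly why the statement of (i) is phrased with the bound $i < p - 1$ rather than $i < p$.
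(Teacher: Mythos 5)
Your proof is correct and follows essentially the same route as the paper: part (i) via the local cohomology long exact sequence, affine vanishing, and Grothendieck's depth-vanishing theorem using the Cohen--Macaulay hypothesis, and part (ii) by induction along the complex using the short exact sequences $0 \to {\cal K}_{l-j+1} \to {\cal P}_{l-j+1} \to {\cal K}_{l-j} \to 0$ on $X\setminus S$. The only cosmetic differences are that the paper reduces (i) to the structure sheaf by writing a projective as a direct summand of a free module, and starts the induction at ${\cal K}_{l-1}\cong {\cal P}_l$ on $X\setminus S$ rather than at your (equally valid) base case ${\cal K}_l\vert_{X\setminus S}=0$.
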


\begin{proof}
(i) Let $i<p-1$ be a positive integer. Since the functor H$^{i}(X\setminus S,\bullet )$ 
commutes with the direct sum, it suffices to prove
${\mathrm {H}}^{i}(X\setminus S,\an X{})=0$. Since $S$ is a closed subset of $X$, 
one has the relative cohomology long exact sequence
$$ \cdots \longrightarrow 
{\mathrm H}^{i}_{S}(X,\an X{}) \longrightarrow {\mathrm H}^{i}(X,\an X{})
\longrightarrow {\mathrm H}^{i}(X\setminus S,\an X{}) \longrightarrow 
{\mathrm H}^{i+1}_{S}(X,\an X{}) \longrightarrow \cdots .$$
Since $X$ is affine, ${\mathrm {H}}^{i}(X,\an X{})$ equals zero and 
${\mathrm {H}}^{i}(X\setminus S,\an X{})$ is
isomorphic to ${\mathrm {H}}_{S}^{i+1}(X,\an X{})$. Since $X$ is Cohen-Macaulay, the 
codimension $p$ of $S$ in $X$ equals the depth of its ideal of definition in $\k[X]$ 
\cite[Ch. 6, Theorem 17.4]{Mat}. Hence, according to ~\cite[Theorem 3.8]{Gro}, 
${\mathrm {H}}_{S}^{i+1}(X,\an X{})$ and  ${\mathrm {H}}^{i}(X\setminus S,\an X{})$ equal
zero since $i+1<p$. 

(ii) Let $j$ be a nonnegative integer. Since $S$ contains the support of the homology of 
the complex $P_{\bullet}$, for all nonnegative integer $j$, one has the short exact 
sequence of $\an {X\setminus S}{}$-modules
$$ 0 \longrightarrow 
{\cal K}_{j+1}\left \vert \right. _{X\setminus S} \longrightarrow {\cal P}_{j+1}\left
\vert \right. _{X\setminus S} \longrightarrow 
{\cal K}_{j} \left \vert \right. _{X\setminus S}\longrightarrow 0 $$
whence the long exact sequence of cohomology
$$ \cdots \longrightarrow {\mathrm H}^{i}(X\setminus S,{\cal P}_{j+1})
\longrightarrow {\mathrm H}^{i}(X\setminus S,{\cal K}_{j}) \longrightarrow 
{\mathrm H}^{i+1}(X\setminus S,{\cal K}_{j+1}) \longrightarrow 
{\mathrm H}^{i+1}(X\setminus S,{\cal P}_{j+1}) \longrightarrow \cdots .$$
Then, by (i), for $0<i<p-2$, the cohomology groups 
${\mathrm {H}}^{i}(X\setminus S,{\cal K}_{j})$ and 
${\mathrm {H}}^{i+1}(X\setminus S,{\cal K}_{j+1})$ are isomorphic 
since $P_{j+1}$ is a projective module. Since ${\cal P}_{i}=0$ for $i>l$, 
${\cal K}_{l-1}$ and ${\cal P}_{l}$ have isomorphic restrictions to $X\setminus S$. In 
particular, by (i), for $0<i<p-1$, 
${\mathrm {H}}^{i}(X\setminus S,{\cal K}_{l-1})$ equal zero.
Then, by induction on $j$, for $0<i<p-j$, 
${\mathrm {H}}^{i}(X\setminus S,{\cal K}_{l-j})$ equals zero.
\end{proof}

\begin{prop} \label{pp}
Let $R'$ be a $\k[X]$-module containing $R$. Suppose that the following conditions 
are verified:
\begin{list}{}{}
\item {\rm (1)} $p$ is at least $l+2$,
\item {\rm (2)} $X$ is normal,
\item {\rm (3)} $S$ contains the support of the homology of the augmented complex 
$P_{\bullet}$.
\end{list}

{\rm (i)} The complex $P_{\bullet}$ is a projective resolution of $R$ of length $l$.

{\rm (ii)} Suppose that  $R'$ is torsion free and that $S$ contains the support in $X$ of
$R'/R$. Then $R'=R$.
\end{prop}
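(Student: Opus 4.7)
The plan for (i) is to prove exactness of the augmented complex at each position via a lifting-and-extension argument. Given $x \in K_j := \ker d_j$ (with $K_0 := \ker \varepsilon$), the goal is to produce $\tilde{x} \in P_{j+1}$ with $d_{j+1}(\tilde{x}) = x$. On $X \setminus S$ the augmented complex is exact by hypothesis (3), yielding short exact sequences of sheaves $0 \to \cal K_{j+1} \to \cal P_{j+1} \to \cal K_j \to 0$. Lemma~\ref{lp}(ii), applied under the hypothesis $p \geq l + 2$, gives $H^{1}(X \setminus S, \cal K_{j+1}) = 0$ for all $j$ in the relevant range (the condition $1 < p-(l-(j+1))$ reduces to $p > l-j$, always satisfied), so the associated long exact sequence produces a lift $\tilde{x} \in \Gamma(X \setminus S, \cal P_{j+1})$ of $x|_{X \setminus S}$.

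Because $\cal P_{j+1}$ is locally free, $X$ is normal, and $\mathrm{codim}(S) \geq 2$, any section of $\cal P_{j+1}$ on $X \setminus S$ extends uniquely to a global section, so $\tilde{x}$ lifts to a member of $P_{j+1}$. The element $d_{j+1}(\tilde{x}) - x \in P_j$ vanishes on $X \setminus S$; since $P_j$ is a projective module over the integral domain $\k[X]$, it is torsion-free, forcing this element to vanish globally, whence $x = d_{j+1}(\tilde{x}) \in B_j$. The edge case $j = l$ is immediate: $\cal K_l|_{X \setminus S} = 0$ by exactness on $X \setminus S$, and $K_l \subseteq P_l$ is torsion-free, so $K_l = 0$.

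Part (ii) then follows by a local depth computation. From (i), $\mathrm{pd}(R) \leq l$. Suppose $R'/R \neq 0$ and choose a minimal associated prime $\mathfrak{p}$. Since $\mathrm{supp}(R'/R) \subseteq S$, we have $\mathrm{ht}(\mathfrak{p}) \geq p \geq l + 2$ and $\mathrm{depth}_{\mathfrak{p}}(R'/R) = 0$. The local ring $\k[X]_{\mathfrak{p}}$ is Cohen-Macaulay of depth $\mathrm{ht}(\mathfrak{p})$, so Auslander-Buchsbaum gives $\mathrm{depth}_{\mathfrak{p}}(R) \geq \mathrm{ht}(\mathfrak{p}) - l \geq 2$, and torsion-freeness of $R'$ gives $\mathrm{depth}_{\mathfrak{p}}(R') \geq 1$. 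The depth lemma applied to $0 \to R \to R' \to R'/R \to 0$ then forces $\mathrm{depth}_{\mathfrak{p}}(R'/R) \geq \min(\mathrm{depth}_{\mathfrak{p}}(R) - 1, \mathrm{depth}_{\mathfrak{p}}(R')) \geq 1$, contradicting the vanishing of depth at a minimal associated prime. Hence $R' = R$.

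The main delicate step is the cohomological lifting in the first paragraph; it is there that the sharp hypothesis $p \geq l + 2$ is used, via Lemma~\ref{lp}(ii). Normality of $X$ is essential for the extension of sections across $S$, while the Cohen-Macaulay condition underpins both Lemma~\ref{lp} and the Auslander-Buchsbaum argument in (ii).
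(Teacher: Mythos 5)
Your part (i) is essentially the paper's own argument: the same short exact sequences of sheaves on $X\setminus S$, the vanishing $\mathrm{H}^{1}(X\setminus S,{\cal K}_{j+1})=0$ from Lemma~\ref{lp},(ii), the extension of sections of the locally free ${\cal P}_{j+1}$ across the codimension~$\geq 2$ set $S$ using normality, and torsion-freeness of $P_{j}$ to conclude global equality. For part (ii), however, you take a genuinely different route. The paper simply runs the same cohomological mechanism once more: since ${\cal R}'$ and ${\cal R}$ agree on $X\setminus S$, a section $\varphi$ of $R'$ lifts over $X\setminus S$ to ${\cal P}_{0}$ (using $\mathrm{H}^{1}(X\setminus S,{\cal K}_{0})=0$, i.e. $1<p-l$), the lift extends to $\psi\in P_{0}$ by normality, and $\varphi-\varepsilon (\psi )$ is torsion in $R'$, hence zero; in particular the paper's (ii) does not logically use (i). You instead deduce $\mathrm{pd}(R)\leq l$ from (i) and argue locally: at an associated prime $\mathfrak{p}$ of $R'/R$ (which lies in $S$, so $\mathrm{ht}(\mathfrak{p})\geq p\geq l+2$), Auslander--Buchsbaum in the Cohen--Macaulay local ring $\k[X]_{\mathfrak{p}}$ gives $\mathrm{depth}\,R_{\mathfrak{p}}\geq 2$, torsion-freeness gives $\mathrm{depth}\,R'_{\mathfrak{p}}\geq 1$, and the depth inequality for $0\to R\to R'\to R'/R\to 0$ contradicts $\mathrm{depth}\,(R'/R)_{\mathfrak{p}}=0$. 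Both arguments are correct; yours is purely local commutative algebra and avoids a second pass through sheaf cohomology and normal extension (normality is then only needed for (i)), at the cost of making (ii) depend on (i) and on depth/associated-prime machinery -- note the statement does not assume $R'$ finitely generated, so you should say a word about why an associated prime of $R'/R$ exists and why the depth inequalities apply (e.g. via the $\mathrm{Ext}(\kappa(\mathfrak{p}),-)$ characterization of depth, or by restricting to the submodule generated by $R$ and one element of $R'$), whereas the paper's section-by-section argument handles arbitrary $R'$ with no extra care.
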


\begin{proof}
(i) Let $j$ be a positive integer. One has to prove that 
${\mathrm {H}}^{0}(X,{\cal K}_{j})$ is the image of $P_{j+1}$. By Condition (3), the 
short sequence of $\an {X\setminus S}{}$-modules
$$ 0 \longrightarrow \left. {\cal K}_{j+1} \right \vert _{X\setminus S} \longrightarrow 
{\cal P}_{j+1} \left. \vert \right. _{X\setminus S} \longrightarrow 
\left. {\cal K}_{j} \right \vert _{X\setminus S}\longrightarrow 0 $$ 
is exact, whence the cohomology long exact sequence
$$ 0 \longrightarrow {\mathrm {H}}^{0}(X\setminus S,{\cal K}_{j+1}) \longrightarrow 
{\mathrm {H}}^{0}(X\setminus S,{\cal P}_{j+1})
\longrightarrow {\mathrm {H}}^{0}(X\setminus S,{\cal K}_{j}) \longrightarrow 
{\mathrm H}^{1}(X\setminus S,{\cal K}_{j+1}) \longrightarrow \cdots .$$ 
By Lemma~\ref{lp},(ii), ${\mathrm {H}}^{1}(X\setminus S,{\cal K}_{j+1})$ equals $0$ since
$1<p-l+j+1$, whence the short exact sequence 
$$ 0 \longrightarrow {\mathrm {H}}^{0}(X\setminus S,{\cal K}_{j+1}) \longrightarrow 
{\mathrm {H}}^{0}(X\setminus S,{\cal P}_{j+1})
\longrightarrow {\mathrm {H}}^{0}(X\setminus S,{\cal K}_{j}) \longrightarrow 0 .$$ 
Since the codimension of $S$ in $X$ is at least $2$ and since $X$ is irreducible and
normal, the restriction morphism from $P_{j+1}$ to 
${\mathrm {H}}^{0}(X\setminus S,{\cal P}_{j+1})$ is an isomorphism. Let $\varphi $ be in 
${\mathrm {H}}^{0}(X,{\cal K}_{j})$. Then there exists an element $\psi $ of 
$P_{j+1}$ whose image $\psi '$ in ${\mathrm {H}}^{0}(X,{\cal K}_{j})$ has the same 
restriction to $X\setminus S$ as $\varphi $. Since $P_{j}$ is a projective module and 
since $X$ is irreducible, $P_{j}$ is torsion free. Then $\varphi =\psi '$ since 
$\varphi -\psi '$ is a torsion element of $P_{j}$, whence the assertion.

(ii) Let ${\cal R}'$ be the localization of $R'$ on $X$. Arguing as in (i), since $S$ 
contains the support of $R'/R$ and since $1<p-l$, the short sequence
$$ 0 \longrightarrow {\mathrm {H}}^{0}(X\setminus S,{\cal K}_{0}) \longrightarrow 
{\mathrm {H}}^{0}(X\setminus S,{\cal P}_{0})
\longrightarrow {\mathrm {H}}^{0}(X\setminus S,{\cal R}') \longrightarrow 0 $$ 
is exact. Moreover, the restriction morphism from $P_{0}$ to 
${\mathrm {H}}^{0}(X/S,{\cal P}_{0})$ is an isomorphism since the codimension of $S$ in 
$X$ is at least $2$ and since $X$ is irreductible and normal. Let $\varphi $ be in $R'$. 
Then for some $\psi $ in $P_{0}$, $\varphi -\varepsilon (\psi )$ is a torsion element of 
$R'$. So $\varphi =\varepsilon (\psi )$ since $R'$ is torsion free, whence the assertion.
\end{proof}

\begin{coro} \label{cp}
Let $C_{\bullet}$ be a homology complex of finite $\k[X]$-modules whose 
length $l$ is finite and positive. For $j=0,\ldots,l$, denote by $Z_{j}$ the space
of cycles of degree $j$ of $C_{\bullet}$. Suppose that the following conditions 
are verified:
\begin{list}{}{}
\item {\rm (1)} $S$ contains the support of the homology of the complex
$C_{\bullet}$,
\item {\rm (2)} for all $i$, $C_{i}$ is a submodule of a free module,
\item {\rm (3)} for $i=1,\ldots,l$, $C_{i}$ has projective dimension at most $d$,
\item {\rm (4)} $X$ is normal and $l+d\leq p-1$.
\end{list}
Then $C_{\bullet}$ is acyclic and for $j=0,\ldots,l$, $Z_{j}$ has projective dimension
at most $l+d-j-1$.
\end{coro}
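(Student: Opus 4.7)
The plan is to establish both conclusions simultaneously by descending induction on $j$, running from $j=l$ down to $j=0$. The inductive hypothesis will be that $H_k(C_\bullet)=0$ and that $Z_k$ has projective dimension at most $l+d-k-1$ for every $k>j$. The main tool throughout will be Proposition~\ref{pp},(ii), which converts the codimension bound on $S$ into a rigidity statement for torsion-free extensions of modules of controlled projective dimension.

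For the base case $j=l$, since $C_{l+1}=0$ one has $H_l=Z_l$. The module $Z_l$ sits inside $C_l$, which itself embeds in a free $\k[X]$-module by condition~(2); hence $Z_l$ is torsion-free over the domain $\k[X]$. By condition~(1), its support lies in the proper closed subset $S$, and any finitely generated torsion-free module over a domain whose support lies in a proper closed subset must vanish. So $Z_l=0$, and the projective dimension bound is vacuous.

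For the inductive step, assuming $H_k=0$ for every $k>j$, I would translate these vanishings into the exact sequence
$$0 \longrightarrow C_l \longrightarrow C_{l-1} \longrightarrow \cdots \longrightarrow C_{j+1} \longrightarrow B_j \longrightarrow 0,$$
where $B_j$ denotes the image of $C_{j+1}\to C_j$. Since each $C_k$ appearing above has projective dimension at most $d$ by~(3), a standard syzygy calculation bounds the projective dimension of $B_j$ by $d+(l-j-1)=l+d-j-1$, and splicing in projective resolutions produces a genuine projective resolution of $B_j$ of that length. I would then apply Proposition~\ref{pp},(ii) to $R=B_j\subseteq R'=Z_j$: the module $Z_j$ is torsion-free because it embeds in $C_j$, hence in a free module by~(2); the quotient $R'/R=H_j$ is supported on $S$ by~(1); $X$ is normal by~(4); and the length $l+d-j-1$ of the resolution satisfies $l+d-j-1\leq p-2$ because $p\geq l+d+1$ and $j\geq 0$. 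The proposition forces $Z_j=B_j$, giving both $H_j=0$ and the desired bound $\operatorname{pd}(Z_j)\leq l+d-j-1$. Condition~(5) guarantees in particular that at $j=0$ the conclusion reads $B_0=Z_0=C_0$, which is exactly acyclicity in the bottom degree.

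The main obstacle will be arranging the induction so that the right projective resolution of $B_j$ is actually available at the moment Proposition~\ref{pp},(ii) is invoked. The numerics are tight: the codimension hypothesis $l+d\leq p-1$ is precisely what is needed for the proposition to apply at every $j\in\{0,\ldots,l-1\}$, with no slack to spare.
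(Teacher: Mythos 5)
Your proposal is correct and follows essentially the same route as the paper: a descending induction in which one produces a finite projective resolution of the boundary module $B_j$ (the paper does this via the mapping cone of resolutions of $Z_{j+1}$ and $C_{j+1}$, which amounts to the same splicing you describe) and then invokes Proposition~\ref{pp},(ii) with $R=B_j\subseteq R'=Z_j$, using torsion-freeness from condition (2) and the support hypothesis (1), to force $Z_j=B_j$ and the bound $l+d-j-1$ on the projective dimension. The base case $Z_l=0$ and the numerical check $l+d-j-1\leq p-2$ are handled exactly as in the paper.
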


\begin{proof}
Prove by induction on $l-j$ that the complex
$$ 0 \longrightarrow C_{l} \longrightarrow \cdots \longrightarrow C_{j+1}
\longrightarrow Z_{j} \longrightarrow 0 $$
is acyclic and that $Z_{j}$ has projective dimension at most $l+d-j-1$. For 
$j=l$, $Z_{j}$ equals zero since $C_{l}$ is torsion free by Condition (2) and since 
$Z_{l}$ a submodule of $C_{l}$, supported by $S$ by Condition (1). Suppose 
$j\leq l-1$ and suppose the statement true for $j+1$. By Condition (3), $C_{j+1}$ has
a projective resolution $P_{\bullet}$ whose length is at most $d$ and whose terms are 
finitely generated. By induction hypothesis, $Z_{j+1}$ has a projective resolution 
$Q_{\bullet}$ whose length is at most $l+d-j-2$ and whose terms are finitely generated, 
whence an augmented complex $R_{\bullet}$ of projective modules whose length is 
$l+d-j-1$,
$$ 0 \longrightarrow Q_{l+d-j-2} \oplus P_{l+d-j-1} \longrightarrow 
\cdots  \longrightarrow Q_{0} \oplus P_{1}
\longrightarrow P_{0} \longrightarrow  Z_{j} \longrightarrow 0 .$$
Denoting by $\dd $ the differentials of $Q_{\bullet}$ and $P_{\bullet}$, the restriction 
to $Q_{i}\oplus P_{i+1}$ of the differential of $R_{\bullet}$ is the map
$$(x,y) \mapsto (\dd x,\dd y +(-1)^{i} \delta (x)) ,$$ 
with $\delta $ the map which results from the injection of $Z_{j+1}$ into 
$C_{j+1}$. Since $P_{\bullet}$ and $Q_{\bullet}$ are projective resolutions, the complex 
$R_{\bullet}$ is a complex of projective modules having no homology 
in positive degree. Hence the support of the homology of the augmented complex 
$R_{\bullet}$ is contained in $S$ by Condition (1). Then, by Proposition~\ref{pp} and 
Condition (4), $R_{\bullet}$ is a projective resolution of $Z_{j}$ of length 
$l+d-j-1$ since $Z_{j}$ is a submodule of a free module by Condition (2), whence the 
corollary since $Z_{0}=C_{0}$ by definition.
\end{proof}

\begin{rema}
Let ${\mathrm {D}}(X)$ be the bounded derived category of finite $\k[X]$-modules. For $E$
an object of ${\mathrm {D}}(X)$, denote by ${\mathrm {Supp}}(E)$ the union of the 
supports in $X$ of the homology modules ${\mathrm {H}}_{i}(E)$ of $E$. By definition, the 
{\it homological dimension} of $E$, written $\hd (E)$, is the smallest integer 
$s$ such that $E$ is quasi-isomorphic to a complex of projective $\k[X]$-modules of 
length $s$. If no such integer exists, $\hd (E)=\infty $. Since $X$ is
Cohen-Macaulay, according to~\cite[Ch. 6, Theorem 17.4]{Mat}, we have the following 
proposition:

\begin{prop}~\cite[Corollary 5.5]{BM}\label{p2p}
Let $E$ be a non trivial object of ${\mathrm {D}}(X)$. Then for all irreducible 
component $\Gamma $ of ${\mathrm {Supp}}(E)$,
$$ \dim X - \dim \Gamma \leq \hd (E) .$$ 
\end{prop}

Corollary~\ref{cp} is a little bit smilar to Proposition~\ref{p2p}. But 
it is not a consequence of Proposition~\ref{p2p} since its proof does not use the 
normality of $X$.
\end{rema}

\end{document}